\title{On the placement of an obstacle so as to optimize the Dirichlet heat content}
\author[]{Liangpan Li}
	\affil[]{School of Mathematics, Shandong University, Jinan, Shandong, 250100, China\\
liliangpan@gmail.com}	
\date{}
\newtheorem{thm}{Theorem}[section]
\newtheorem{lem}[thm]{Lemma}
\newtheorem{cor}[thm]{Corollary}
\newtheorem{rmk}[thm]{Remark}
\numberwithin{equation}{section}
\begin{document}

\maketitle

\begin{abstract}

We prove that among all doubly connected domains of $\mathbb{R}^n$ ($n\geq2$) bounded by two spheres
of given radii, the Dirichlet heat content at any fixed time achieves
its minimum when the spheres are concentric. This is shown to be a special case of a more general theorem
concerning the optimal placement
of a convex obstacle inside some larger domain so as to maximize or minimize the
Dirichlet heat content.

\medskip

\noindent
{\bf Keywords: Heat kernel, heat content, Dirichlet boundary condition, principle of not feeling the boundary, maximum principles for parabolic equations}
\medskip

\noindent
{\bf Mathematics Subject Classifications (2020):} Primary 35K08; Secondary 49R05
\vskip25pt
\end{abstract}

\pagestyle{fancy}
\fancyhf{}
\fancyhead[LE,RO]{\thepage}
\fancyhead[LO]{\small{Heat content optimization}}
\fancyhead[RE]{\small{L. Li}}

\section{Introduction}

Let $\Omega$ be a bounded path-connected open subset  of  $\mathbb{R}^n$ ($n\geq2$) with smooth boundary,
on which
we consider the (non-negative) Dirichlet Laplacian $\triangle_{\Omega}$ \cite{Davies,Edmunds} with eigenvalues
\[\lambda_1(\Omega)<\lambda_2(\Omega)\leq\lambda_3(\Omega)\leq\cdots\leq\lambda_k:=\lambda_k(\Omega)\leq\cdots\rightarrow\infty\]
and associated normalized  eigenfunctions $\{\phi_k:=\phi_k(\Omega)\}_{k=1}^{\infty}$, that is, $-\triangle \phi_k=\lambda_k\phi_k$ in $\Omega$, $\phi_k|_{\partial\Omega}=0$, and $\int_{\Omega}\phi_k^2=1$, where it is of no harm to assume that $\{\phi_k\}_{k=1}^{\infty}$ are real valued.
It is well known that due to interior regularity \cite[\S 6.3.1]{Evans}
and boundary regularity \cite[\S 6.3.2]{Evans}, $\{\phi_k\}_{k=1}^{\infty}$
are elements of $C^{\infty}(\overline{\Omega})$ \cite[\S 6.5.1]{Evans}.
Recall Weyl's celebrated asymptotic formula \cite{Chavel84,Dodziuk}
\begin{equation}\label{weyl}\lim_{k\rightarrow\infty}\frac{\lambda_k^{n/2}}{k}=\frac{(4\pi)^{n/2}\Gamma(\frac{n+2}{2})}{|\Omega|},\end{equation}
where $|\Omega|$ means the volume of $\Omega$, and  an optimal uniform bound of Grieser \cite{Grieser}
asserting
\begin{equation}\label{grieserlaw}\max_{x\in{\Omega}}|\phi_k(x)|\leq C_{\Omega}k^{\frac{n-1}{2n}}\ \ \ (k\in\mathbb{N}),\end{equation}
where $C_{\Omega}$ is some positive constant depending only on $\Omega$.

The Dirichlet heat kernel $p_{\Omega}(x,y,t)$ for $\Omega$ on $\overline{\Omega}\times\overline{\Omega}\times(0,\infty)$
was originally introduced \cite{Dodziuk,Levi} as
the classical solution  to the heat equation \[\triangle_xp_{\Omega}=\frac{\partial p_{\Omega}}{\partial t}\]
in $\Omega\times(0,\infty)$ subject to
$p(x,y,t)=0$ whenever $x\in\partial\Omega$ and  \[\lim_{t\rightarrow0}\int_{\Omega}p_{\Omega}(x,y,t)u(y)dy=u(x)\]
uniformly for every function $u$
 continuous on $\overline{\Omega}$ and vanishing on $\partial\Omega$.
It is uniquely determined and traditionally written via Mercer's theorem in functional analysis \cite{Lax} as
\begin{equation}\label{heatkernel}p_{\Omega}(x,y,t)=\sum_{k=1}^{\infty}e^{-\lambda_kt}\phi_k(x)\phi_k(y)\ \ \ ((x,y,t)\in\overline{\Omega}\times\overline{\Omega}\times(0,\infty)).\end{equation}
According to (\ref{weyl}) and (\ref{grieserlaw}), we see that the series in (\ref{heatkernel})
converges uniformly on $\overline{\Omega}\times\overline{\Omega}\times[\epsilon,\infty)$ for every $\epsilon>0$.
Actually, by considering (\ref{weyl}), (\ref{grieserlaw}) as well as the elliptic regularity \cite[\S 6.3]{Evans},
one easily gets that $p_{\Omega}$ is a smooth function  satisfying
\[(\partial^{\alpha}p_{\Omega})(x,y,t)=\sum_{k=1}^{\infty}\partial^{\alpha}\big(e^{-\lambda_kt}\phi_k(x)\phi_k(y)\big)\]
for an arbitrary multi-partial derivative $\partial^{\alpha}$ of $2n+1$ variables  \cite{Dodziuk83}.
A striking property of Dirichlet heat kernel is that $p_{\Omega}$
is positive in the interior region $\Omega\times\Omega\times(0,\infty)$ \cite{Dodziuk83,Grigoryan99}. One may also understand  heat kernel
from the viewpoint of probability theory \cite{Simon78,Simon79}, or relate it to wave kernel by considering
the functional calculus
\[e^{-t\triangle_{\Omega}}=\frac{1}{2\sqrt{\pi t}}\int_{\mathbb{R}}\cos(s\sqrt{\triangle_{\Omega}})e^{-\frac{s^2}{4t}}ds\ \ \ (t>0)\]
\cite{Cheeger,Li16,Sikora}.

There are many derived concepts from Dirichlet heat kernel including the Dirichlet heat trace of $\Omega$ defined by
\begin{equation}\label{trace}Z_{\Omega}(t)=\int_{\Omega}p_{\Omega}(x,x,t)dx=\sum_{k=1}^{\infty}e^{-\lambda_kt}\ \ \ (t>0),\end{equation}
and the Dirichlet heat content of $\Omega$ given as
\begin{equation}\label{content}H_{\Omega}(t)=\int_{\Omega}\int_{\Omega}p_{\Omega}(x,y,t)dxdy=\sum_{k=1}^{\infty}e^{-\lambda_kt}\big(\int_{\Omega}\phi_k(x)dx\big)^2 \ \ \ (t>0).\end{equation}
Both functions of positive time are known to have full short-time asymptotic expansions:
\begin{align}
Z_{\Omega}(t) &\sim \frac{|\Omega|}{(4\pi t)^{n/2}}+\sum_{k=1}^{\infty}\alpha_k(\Omega)t^{\frac{k-n}{2}}\ \ \ (t\rightarrow0),\label{FullTrace}\\
H_{\Omega}(t) &\sim |\Omega|+\sum_{k=1}^{\infty}\beta_k(\Omega)t^{\frac{k}{2}}\ \ \ (t\rightarrow0).
\end{align}
A tremendous  amount of effort has been put to establish the existence of both formulae and compute the
coefficients in terms of the geometry of $\partial\Omega$ in the current
\cite{Davies89,BergGilkey94,BergLe94,McKean,MP,Savo98} and more specific
(such as smooth planar regions \cite{Smith}, polygonal domains \cite{BS88,BS90})
or  general (such as vector-valued elliptic operators \cite{Gilkey04,Greiner}) settings.
The Dirichlet spectral zeta function for $\Omega$ is defined as the meromorphic extension
of
\begin{equation}\label{zeta}\zeta_{\Omega}:z\mapsto \sum_{k=1}^{\infty}\frac{1}{\lambda_k^z}=\frac{1}{\Gamma(z)}\int_{0}^{\infty}t^{z-1}Z_{\Omega}(t)dt\ \ \ (\mbox{Re}(z)>\frac{n}{2})\end{equation}
to the complex plane $\mathbb{C}$ whose singularities can be deduced from (\ref{FullTrace})
and basic properties of the Mellin transform \cite{Mellin} to be simple poles at
$\frac{n}{2},  \frac{n-1}{2}, \ldots, \frac{1}{2}$, and  negative half-integers.
The  regularized Dirichlet determinant of $\Omega$, denoted by $\mbox{det}(\Omega)$, is then defined as $\exp(-\frac{d\zeta_{\Omega}}{dz}(0))$.
In much the same way,
\begin{equation}\label{HCspectral}
\mathcal{T}_{\Omega}:z\mapsto \sum_{k=1}^{\infty}\frac{(\int_{\Omega}\phi_k)^2}{\lambda_k^z}=\frac{1}{\Gamma(z)}\int_{0}^{\infty}t^{z-1}H_{\Omega}(t)dt\ \ \ (\mbox{Re}(z)>\frac{n}{2})\end{equation}
admits a  meromorphic extension to  $\mathbb{C}$ whose singularities are
  simple poles at negative half-integers. We also note that
 \begin{equation}k!\cdot\mathcal{T}_{\Omega}(k)=k\int_0^{\infty}t^{k-1}H_{\Omega}(t)dt\ \ \ (k\in\mathbb{N})\end{equation}
  is called $k$-th exit time moment of $\Omega$ \cite{Colloday,Savo}.

In this paper we are particularly interested in  $H_{\Omega}(t)$, which represents the total amount of heat
at time $t$ of the (weak) solution to the heat equation $\triangle_x\psi=\frac{\partial\psi}{\partial t}$
in $\Omega\times(0,\infty)$
with initial temperature 1 and zero boundary conditions on $\partial\Omega\times(0,\infty)$.
Since $\phi_1$ can be assumed to be positive in $\Omega$, we see that as $t$ goes to infinity,
the dominating term of the series in (\ref{content}) is the first one.
In contrast to the heat trace series in (\ref{trace}),  some  higher terms in (\ref{content})
could vanish:
if $\Omega$ is a ball \cite{Folland,Helffer} or an annulus \cite{Li07}, then
\[\int_{\Omega}\phi_2=\cdots=\int_{\Omega}\phi_{n+1}=0,\]
which in turn yields
\[H_{\Omega}(t)=e^{-\lambda_1t}\big(\int_{\Omega}\phi_1\big)^2+\sum_{k=n+2}^{\infty}e^{-\lambda_kt}\big(\int_{\Omega}\phi_k\big)^2 \ \ \ (t>0).\]
A beautiful result of Burchard and  Schmuckenschl\"{a}ger \cite{Burchard} claims that $H_{\Omega}(t)\leq H_{B}(t)$
for all $t>0$ provided that $B$ is an open ball in $\mathbb{R}^n$ having the same volume of $\Omega$.
To compare, Luttinger \cite{Luttinger} established $Z_{\Omega}(t)\leq Z_{B}(t)$ still for all $t>0$,
which, to the best of the author's knowledge, may be the first all-time comparison result  in spectral theory.
An advantage of such kind of  theorems is that they may deliver more information than isoperimetric inequalities
for a single quantity such as the Faber-Krahn inequality  \cite{Chavel84,Chavel01} for the lowest eigenvalue $\lambda_1(\Omega)$.

The main purpose of this paper is to  establish an all-time comparison theorem for the Dirichlet heat content
of domains with ``holes". Our result was motivated by  the paper \cite{ElSoufi2016}
by El Soufi and Harrell concerning a similar result for heat trace.
Let $B$
be an open ball of radius $r_1$ in $\mathbb{R}^n$
such that its closure $\overline{B}$ is contained in another larger concentric open ball $\mathscr{B}$ of radius $r_2$
in $\mathbb{R}^n$. Consider the Dirichlet eigenvalue problem
on $\mathscr{A}_{s}:=\mathscr{B}\backslash(\overline{B}+sV)$,  where $V$ is a fixed unit vector in $\mathbb{R}^n$
and $s\in[0,r_2-r_1)$ is a displacement parameter.

\medskip

  \begin{tikzpicture}

%\draw[black,thick] (1,1) circle (2cm);

%\fill[black] (1.5,1) arc (0:360:0.5cm) -- cycle;

% \begin{scope}[xshift=200]

\draw[black,thick] (1,1) circle (2cm);

\fill[black] (2.5,1) arc (0:360:0.5cm) -- cycle;

\draw[red,thick,->,dashed] (-2,1) -- (4,1);

\draw (4,1) node[right] {$V$};

\draw (0,0) node[right] {$\mathscr{A}_{s}$};

%\end{scope}

\end{tikzpicture}\\
It is known that
\begin{itemize}
  \item (D1)  $\lambda_1(\mathscr{A}_{s})$
is a strictly decreasing function of $s$ \cite{Harrell2001,Kesavan,Ramm} (see also \cite{Hersch}),
\item (D2)  $\lambda_2(\mathscr{A}_{s})$ attains its maximal value
uniquely at $s=0$ \cite{ElsoufiKiwan08},
\item (D3)  $Z_{\mathscr{A}_{s}}(t)$ is a non-decreasing function of $s$ for
every $t>0$ \cite{ElSoufi2016}, \item (D4) $Z_{\mathscr{A}_{s_1}}(t)<Z_{\mathscr{A}_{s_2}}(t)$ as long as $s_1<s_2$ are fixed  and $t>0$ is sufficiently small \cite{Berg04},
\item (D5)   $\mbox{det}(\mathscr{A}_{s})$ is a strictly decreasing function of $s$ \cite{ElSoufi2016}.
\end{itemize}
We mention that (D5) also follows from (D3) and (D4) because due to Kac's principle of not feeling the boundary \cite{Berg81,Berg89,Li16},
the short-time asymptotic coefficients $\{\alpha_k(\mathscr{A}_{s})\}_{k=1}^{\infty}$ in
(\ref{FullTrace}) (with $\Omega$ replaced by $\mathscr{A}_{s}$)
are independent of $s$, from which
it is routine \cite{ElSoufi2016,Strohmaier} to deduce
\[\frac{d\zeta_{\mathscr{A}_{s_2}}}{dz}(0)-\frac{d\zeta_{\mathscr{A}_{s_1}}}{dz}(0)=
\int_{0}^{\infty}\frac{Z_{\mathscr{A}_{s_2}}(t)-Z_{\mathscr{A}_{s_1}}(t)}{t}dt.\]
Apart from the Dirichlet eigenvalue problem  on $\mathscr{A}_s$, one may also
consider Neumann boundary conditions \cite{Anoop21,Verma}, or mixed boundary conditions \cite{Hersch,Payne},
or the Steklov eigenvalue problem \cite{Ftouhi,Paoli},
or the mixed Steklov-Dirichlet problem \cite{Hong,Verma},
or the $p$-Laplacian \cite{Anoop,Chor}, and so on.
Our main result reads as follows.

\begin{thm}\label{thm11}
For any $t>0$, $H_{\mathscr{A}_{s}}(t)$
 is a strictly increasing function of $s$.
\end{thm}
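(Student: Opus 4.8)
\section*{Proof proposal for Theorem~\ref{thm11}}

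The plan is to differentiate with respect to $s$. Since $\mathscr A_s$ varies smoothly with $s\in[0,r_2-r_1)$ (for instance one may transport everything to the fixed annulus $\mathscr A_0$ by a smooth family of diffeomorphisms supported in a neighbourhood of the obstacle), $s\mapsto H_{\mathscr A_s}(t)$ is continuous, and it suffices to show
\[
\frac{d}{ds}H_{\mathscr A_s}(t)>0\qquad\bigl(s\in(0,r_2-r_1),\ t>0\bigr);
\]
strict monotonicity on the whole interval $[0,r_2-r_1)$ then follows from continuity by a standard argument. Throughout write $K_s:=\overline B+sV$ for the obstacle, $\nu$ for the outward unit normal of $\mathscr A_s$, and let $u_s(\cdot,\tau)$ be the solution of $\frac{\partial u_s}{\partial\tau}=\triangle u_s$ on $\mathscr A_s$ with $u_s(\cdot,0)=1$ and $u_s|_{\partial\mathscr A_s}=0$, so that $H_{\mathscr A_s}(t)=\int_{\mathscr A_s}u_s(x,t)\,dx$. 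Since $p_{\mathscr A_s}>0$ in the interior we have $u_s>0$ in $\mathscr A_s$, and the parabolic Hopf lemma gives $\frac{\partial u_s}{\partial\nu}<0$ on $\partial\mathscr A_s\times(0,\infty)$.

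The first ingredient is a Hadamard-type variational formula. Only the obstacle moves, with translation velocity $V$, so the normal speed of $\partial\mathscr A_s$ is $0$ on $\partial\mathscr B$ and $V\cdot\nu$ on $\partial K_s$. Putting $v:=\frac{\partial}{\partial s}u_s$, the Hadamard rule for Dirichlet data states that $v$ solves the heat equation on $\mathscr A_s$ with $v(\cdot,0)=0$ and $v=-(V\cdot\nu)\,\frac{\partial u_s}{\partial\nu}$ on $\partial\mathscr A_s$. Pairing $v(\cdot,\tau)$ against the time-reversed solution $u_s(\cdot,t-\tau)$, using Green's identity, and integrating in $\tau$ over $[0,t]$ — the Dirichlet condition $u_s(\cdot,t-\tau)=0$ kills one boundary term, the initial condition $u_s(\cdot,0)\equiv1$ converts the $\tau=t$ endpoint into $\int_{\mathscr A_s}v(\cdot,t)=\frac{d}{ds}H_{\mathscr A_s}(t)$, and $v(\cdot,0)=0$ kills the $\tau=0$ endpoint — one arrives at
\[
\frac{d}{ds}H_{\mathscr A_s}(t)=\int_0^t\!\int_{\partial K_s}(V\cdot\nu)(y)\,\frac{\partial u_s}{\partial\nu}(y,\tau)\,\frac{\partial u_s}{\partial\nu}(y,t-\tau)\,d\sigma(y)\,d\tau .
\]
Writing $y=sV+r_1\omega$ with $\omega\in S^{n-1}$, so $\nu(y)=-\omega$ and $(V\cdot\nu)(y)=-(\omega\cdot V)$, and recalling $\frac{\partial u_s}{\partial\nu}<0$, one sees that after pairing each point of the hemisphere $\{\omega\cdot V>0\}$ with its mirror image under the reflection $\sigma$ in the hyperplane $P_s$ through $sV$ orthogonal to $V$, the positivity of $\frac{d}{ds}H_{\mathscr A_s}(t)$ will follow once one knows that, for every such $y$ (on the side of $K_s$ facing $\partial\mathscr B$) and its mirror image $y^\ast=\sigma(y)$ (facing the centre of $\mathscr B$),
\[
\Bigl|\frac{\partial u_s}{\partial\nu}(y^\ast,\tau)\Bigr|>\Bigl|\frac{\partial u_s}{\partial\nu}(y,\tau)\Bigr|\qquad(\tau>0) .
\]

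This comparison of boundary gradients is the heart of the matter and I would prove it by reflection plus the parabolic maximum principle. Let $\mathscr A_s^+:=\mathscr A_s\cap\{(x-sV)\cdot V>0\}$. From the identity $|\sigma x|^2-|x|^2=-4s\,(x-sV)\cdot V$ we get, for $s>0$, that $|\sigma x|<|x|\le r_2$ for every $x\in\mathscr A_s^+$, whence $\sigma x\in\mathscr B$; since $K_s$ is $\sigma$-invariant, $D_s:=\sigma(\mathscr A_s^+)\subset\mathscr A_s$. On $D_s\times(0,\infty)$ define $w(x,\tau):=u_s(x,\tau)-u_s(\sigma x,\tau)$. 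Because $\triangle$ commutes with the isometry $\sigma$, $w$ solves the heat equation; moreover $w(\cdot,0)\equiv0$, and $w\ge0$ on the parabolic boundary: the parts of $\partial D_s$ lying in $P_s$ or on $\partial K_s$ give $w=0$, while the part arising from $\sigma\bigl(\partial\mathscr B\cap\{(x-sV)\cdot V>0\}\bigr)$ lies in the open set $\mathscr A_s$, where $u_s>0$ and $u_s\circ\sigma=0$, so $w>0$ there. The minimum principle gives $w\ge0$ throughout $D_s\times(0,\infty)$; as that last boundary piece is non-empty for $s>0$, $w\not\equiv0$, so the strong minimum principle gives $w>0$ in the interior. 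Applying the parabolic Hopf lemma to $w$ at the side of $\partial K_s$ facing the centre of $\mathscr B$ (a smooth part of $\partial D_s$ on which $w$ vanishes), the strict negativity of the outward normal derivative of $w$ there unwinds — using $\sigma(K_s)=K_s$ and once more $\frac{\partial u_s}{\partial\nu}<0$ — to precisely $|\partial_\nu u_s(y^\ast,\tau)|>|\partial_\nu u_s(y,\tau)|$. Inserting this into the variational formula, the inner integral over $\partial K_s$ becomes strictly positive for every $\tau\in(0,t)$, hence $\frac{d}{ds}H_{\mathscr A_s}(t)>0$, as desired.

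The step I expect to require the most care is the rigorous justification of the variational formula: one must show that $s\mapsto u_s$ is differentiable with the asserted boundary behaviour of $v$, and that the Green's-identity computation is legitimate despite the initial boundary layer of $u_s$ near $\tau=0$. Both can be handled in standard ways — performing the computation on $[\epsilon,t]$ and letting $\epsilon\downarrow0$, or differentiating the manifestly smooth quantity $H_{\mathscr A_s}(t)=\|u_s(\cdot,t/2)\|_{L^2(\mathscr A_s)}^2$ after transporting to a fixed domain. The reflection argument, by contrast, is soft once the inclusion $\sigma(\mathscr A_s^+)\subset\mathscr A_s$ is in hand, and it is exactly what pins down the sign.
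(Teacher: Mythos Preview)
Your strategy coincides with the paper's: Savo's variational formula, splitting $\partial K_s$ into two hemispheres by the hyperplane through the obstacle's centre, the comparison $u_s(\sigma x,\tau)>u_s(x,\tau)$ on the ``small'' half of the annulus, and finally Friedman's parabolic Hopf lemma to upgrade this to the strict normal-derivative inequality. The only substantive difference is \emph{how} the interior comparison $u_s\circ\sigma>u_s$ is obtained. The paper first proves pointwise heat-kernel inequalities (its Theorems~2.5 and~2.6) and then integrates them to reach Corollary~2.10; you instead apply the maximum principle directly to $w=u_s-u_s\circ\sigma$ on $D_s$. Your route is shorter and perfectly sound in spirit, but your description of it as ``soft'' skips a genuine issue that the paper treats with care (and that in fact motivates its more elaborate detour through heat kernels).

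Concretely, $w$ does \emph{not} extend continuously to $\overline{D_s}\times[0,\infty)$: at points of $\partial K_s\cap\partial D_s$ both $u_s(x,t)$ and $u_s(\sigma x,t)$ are discontinuous at $t=0$, and their difference need not tend to $0$ along arbitrary approaches. So the weak minimum principle cannot be invoked in its textbook form; one must show, uniformly in $x\in\overline{D_s}$, that $\liminf_{t\downarrow 0}w(x,t)\ge 0$ (or equivalently that $\min_{\overline{D_s}}w(\cdot,\epsilon)\to 0$). This is true and follows from a Kac-type localisation---near $\partial K_s$ the short-time behaviour of $u_s$ is governed by the $\sigma$-symmetric obstacle alone, and near $\sigma(\partial\mathscr B)$ one has $u_s(\sigma x,t)=0$ while $u_s(x,t)\ge 0$---but it is not automatic. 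The paper's Lemma~2.1 and Corollary~2.2 are devoted precisely to this $t\downarrow 0$ analysis (via van~den~Berg's estimate), and Remark~2.3 explicitly flags that without it the maximum-principle step is illegitimate. Once you supply that estimate, your argument is complete and equivalent to the paper's.
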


As an immediate corollary, we see that any order exit time moment
of $\mathscr{A}_{s}$ is also a strictly increasing function of the displacement parameter $s$.

Our proof relies on (weak, strong, and Friedman's) maximum principles for parabolic equations \cite{Evans,Friedman,Friedman64,Lieberman,Protter}, Kac's
principle of not feeling the boundary,
and Savo's variational formula
for Dirichlet heat content \cite{Savo}.

\section{Heat kernel comparison}\label{section2}

  \begin{tikzpicture}

\draw[scale=1,domain=-pi:0.75*pi,smooth,color=blue,variable=\t]
plot  ({(1.3+cos(2*\t r))*cos(\t r)},{(1.3+cos(2*\t r))*sin(\t r)});

\draw[scale=1,domain=-pi/2:pi/2,smooth,color=purple,thick,variable=\t]
plot  ({(1.3+cos(2*\t r))*cos(\t r)},{(1.3+cos(2*\t r))*sin(\t r)});

   \draw[scale=1,domain=0.75*pi:pi,smooth,color=red,thick,dashed,variable=\t]
plot  ({(1.3+cos(2*\t r))*cos(\t r)},{(1.3+cos(2*\t r))*sin(\t r)});

\draw[scale=1,domain=0.75*pi:pi,smooth,color=red,thick,dashed,variable=\t]
plot  ({(1.3+cos(2*\t r)+0.7*sin(4*\t r)^2)*cos(\t r)},{(1.3+cos(2*\t r)+0.7*sin(4*\t r)^2)*sin(\t r)});

\draw[black,thick] (0,-1.5) -- (0,1.5);

 \begin{scope}[xshift=150]

  \draw[scale=1,domain=-6*pi/8:9*pi/8,smooth,color=blue,variable=\t]
plot  ({0.75*(1.3+cos(2*\t r))*cos(\t r)},{0.3*(1.3+cos(2*\t r))*sin(\t r)+0.6});

  \draw[scale=1,domain=-pi/2:pi/2,smooth,color=purple,thick,variable=\t]
plot  ({0.75*(1.3+cos(2*\t r))*cos(\t r)},{0.3*(1.3+cos(2*\t r))*sin(\t r)+0.6});

\draw[scale=1,domain=-7*pi/8:-6*pi/8,smooth,color=red,dashed,thick,variable=\t]
plot  ({0.75*(1.3+cos(2*\t r))*cos(\t r)},{0.3*(1.3+cos(2*\t r))*sin(\t r)+0.6});

 \draw[scale=1,domain=7*pi/8:22*pi/8,smooth,color=blue,variable=\t]
plot  ({0.75*(1.3+cos(2*\t r))*cos(\t r)},{0.3*(1.3+cos(2*\t r))*sin(\t r)-0.6});

 \draw[scale=1,domain=-pi/2:pi/2,smooth,color=purple,thick,variable=\t]
plot  ({0.75*(1.3+cos(2*\t r))*cos(\t r)},{0.3*(1.3+cos(2*\t r))*sin(\t r)-0.6});

 \draw[scale=1,domain=6*pi/8:7*pi/8,smooth,color=red,dashed,thick,variable=\t]
plot  ({0.75*(1.3+cos(2*\t r))*cos(\t r)},{0.3*(1.3+cos(2*\t r))*sin(\t r)-0.6});

\draw[red,thick,dashed,] (-0.7,-0.23) -- (-0.7,0.23);

\draw[red,thick,dashed] (-1.4,-0.3) -- (-1.4,0.39);

\draw[black,thick] (0,-1.5) -- (0,1.5);

 \begin{scope}[xshift=150]

\draw[scale=1,domain=-6*pi/8:9*pi/8,smooth,color=red,dashed,thick,variable=\t]
plot  ({0.75*(1.3+cos(2*\t r))*cos(\t r)},{0.3*(1.3+cos(2*\t r))*sin(\t r)+0.6});

\draw[red,thick,dashed,] (-0.7,-0.23) -- (-0.7,0.23);

\draw[red,thick,dashed] (-1.4,-0.3) -- (-1.4,0.39);

\draw[scale=1,domain=6*pi/8:7*pi/8,smooth,color=red,dashed,thick,variable=\t]
plot  ({0.75*(1.3+cos(2*\t r))*cos(\t r)},{0.3*(1.3+cos(2*\t r))*sin(\t r)-0.6});

\draw[scale=1,domain=7*pi/8:22*pi/8,smooth,color=blue,variable=\t]
plot  ({0.75*(1.3+cos(2*\t r))*cos(\t r)},{0.3*(1.3+cos(2*\t r))*sin(\t r)-0.6});

\draw[scale=1,domain=-pi/2:pi/2,smooth,color=purple,thick,variable=\t]
plot  ({0.75*(1.3+cos(2*\t r))*cos(\t r)},{0.3*(1.3+cos(2*\t r))*sin(\t r)-0.6});

\draw[black,thick] (0,-1.5) -- (0,0);

  \end{scope}

 \end{scope}

    \end{tikzpicture}

We  assume that $\Omega$, a  bounded connected open subset of  $\mathbb{R}^n$ with smooth boundary, can be written as the union of  pairwise disjoint non-empty sets \begin{equation}\label{decomposition}\Omega=\Omega_{--}\cup\Omega_{-}\cup (H\cap\Omega\cap\partial\Omega_+)\cup\Omega_+,\end{equation} where
$H$ (in black) is a hyperplane, $\Omega_{+}$ (in purple) and $\Omega_-$ (in blue) are contained in
 distinct connected components of $\mathbb{R}^n\backslash H$ and they are  symmetric with respect to
the hyperplane  $H$, and $\Omega_{--}$ (in red) adheres  somewhere to $\Omega_-$ but
nowhere to $\Omega_+$. In other words, $\Omega\backslash(H\cap\Omega\cap\partial\Omega_+)$ consists of two parts,
one is $\Omega_{+}$ on one side of $\mathbb{R}^n\backslash H$, the other is $\Omega_{--}\cup\Omega_{-}$, and the reflection image of $\Omega_{+}$
with respect to $H$ is a proper subset of $\Omega_{--}\cup\Omega_{-}$.
We remark that $\Omega$ may have several different decompositions of the form (\ref{decomposition}), and each partition is uniquely determined by identifying $\Omega_+$.
For any $x\in\overline{\Omega_+}$, we let $x^{*}$
stand for the reflection point of $x$ with respect to $H$.
As continuous curves in  $\Omega$ connecting two points of $\Omega_+$ may have to pass through
$\Omega_{--}$, we see that $\Omega_+$ is not necessarily connected.
The reflection image of a continuous curve in $\Omega_+$
is  a continuous curve in $\Omega_-$, so $\Omega_{--}\cup\Omega_{-}$ is easily seen to be
 connected.
 To freely apply maximum principles for parabolic equations
 in the space-time $\overline{\Omega_+}\times[0,\infty)$, we further assume  that $\Omega_+$ is  \underline{\emph{connected}},
 unless otherwise stated.
Later on we will see that this condition
can be dropped in many situations (see Remark \ref{rmk27}).

M. van den Berg \cite{Berg81} showed that
\begin{equation}\label{KAC}
\Big|p_{\Omega}(x,y,t)-\frac{1}{(4\pi t)^{n/2}}\exp(-\frac{|x-y|^2}{4t})\Big|\leq\frac{2n}{(4\pi t)^{n/2}}\exp(-\frac{3-2\sqrt{2}}{nt}d_x^2)
\end{equation}
for all $x,y\in\Omega$ and $t>0$, where $d_x:=d(x,\partial\Omega)$ is the distance of $x$ from $\partial\Omega$.
Although this  quantified version of Kac's principle of not feeling the boundary is not optimal for small times \cite{Berg89,Li16,MS},
it well serves the purpose of the current paper.

We will consider  smooth ($C^2$ in $x$ and $C^1$ in $t$ are actually enough) solutions to the heat equation
\begin{equation}\label{heatequation}\triangle_x\psi=\frac{\partial \psi}{\partial t}\end{equation}
in $\Omega_+\times(0,T)$, where $T$ is usually set to be arbitrarily large.
 Suppose some solution  $\psi$
admits a unique continuous extension to
$\overline{\Omega_+}\times[0,T]$, then the weak maximum principle for parabolic equations \cite[p. 368]{Evans}
ensures that both the maximal and minimal values of $\psi$ over the compact cylinder
 $\overline{\Omega_+}\times[0,T]$
are attained on the parabolic boundary $(\partial\Omega_+\times(0,T])\cup (\Omega_+\times\{0\})$ of the cylinder.
This principle does not exclude the possibility of attaining extremal values inside $\Omega_+\times(0,T]$,
and if that situation indeed occurs, say for example at $(x_0,t_0)\in\Omega_+\times(0,T]$, then the strong maximum
principle for parabolic equations \cite[p. 375]{Evans} guarantees that $\psi$ is a constant on $\overline{\Omega_+}\times[0,t_0]$.
We will also apply Friedman's strong maximum principle \cite{Friedman}, which generalizes
Hopf's maximum principle \cite{Gilbarg,Han,Hopf} from elliptic equations to parabolic ones.

\begin{lem}\label{lemma21} For any $x\in H\cap\Omega\cap\partial\Omega_+$, $y\in\overline{\Omega_+}$ and $t>0$, one has
$p_{\Omega}(x,y,t)\leq p_{\Omega}(x,y^*,t)$.
\end{lem}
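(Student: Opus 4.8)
The plan is to deduce the inequality from the weak maximum principle for parabolic equations applied to
\[u(y,t):=p_{\Omega}(x,y,t)-p_{\Omega}(x,y^{*},t),\qquad (y,t)\in\overline{\Omega_{+}}\times[0,\infty),\]
where $x\in H\cap\Omega\cap\partial\Omega_{+}$ is fixed once and for all. First I would observe that for every $t>0$ the map $y\mapsto p_{\Omega}(x,y,t)$ is smooth on $\overline{\Omega}$ and (by symmetry of the heat kernel and term-by-term differentiation of (\ref{heatkernel})) solves $\triangle_{y}p_{\Omega}(x,y,t)=\partial_{t}p_{\Omega}(x,y,t)$ throughout $\Omega$; since $y\mapsto y^{*}$ is an isometry carrying $\Omega_{+}$ onto $\Omega_{-}\subseteq\Omega$, the function $u$ is smooth and satisfies $\triangle_{y}u=\partial_{t}u$ on $\Omega_{+}\times(0,\infty)$. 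Thus $u$ is a classical solution of the heat equation in the open space-time cylinder, and the whole game is to control it on the parabolic boundary $\big(\overline{\Omega_{+}}\times\{0\}\big)\cup\big(\partial\Omega_{+}\times(0,T]\big)$.

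Controlling $u$ on the lateral part $\partial\Omega_{+}\times(0,T]$ is easy. By the structure of the decomposition (\ref{decomposition}) one has $\partial\Omega_{+}=\big(H\cap\Omega\cap\partial\Omega_{+}\big)\cup\big(\partial\Omega_{+}\cap\partial\Omega\big)$: on the first piece $y\in H$, so $y^{*}=y$ and $u(y,t)=0$; on the second piece the Dirichlet condition gives $p_{\Omega}(x,y,t)=0$ while $p_{\Omega}(x,y^{*},t)\ge0$ (heat kernels are non-negative and $y^{*}\in\overline{\Omega_{-}}\subseteq\overline{\Omega}$), so $u(y,t)\le0$. The initial slice is where the real work lies: I would show that $u$ extends continuously to $\overline{\Omega_{+}}\times[0,\infty)$ with $u(\cdot,0)\equiv0$. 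The delicate point is the corner $(x,0)$, because neither $p_{\Omega}(x,y,t)$ nor $p_{\Omega}(x,y^{*},t)$ stays bounded as $(y,t)\to(x,0)$. Here the hypothesis $x\in H$ is crucial: it forces $|x-y|=|x-y^{*}|$ for every $y$, so the leading Gaussian singularities of the two terms coincide and cancel. Feeding this into van den Berg's estimate (\ref{KAC}), applied to each term and using that $d_{x}=d(x,\partial\Omega)$ is a fixed positive number since $x\in\Omega$, yields
\[|u(y,t)|\le\frac{4n}{(4\pi t)^{n/2}}\exp\!\Big(-\frac{3-2\sqrt{2}}{nt}\,d_{x}^{2}\Big)\longrightarrow0\qquad(t\to0^{+}),\]
uniformly in $y\in\overline{\Omega_{+}}$; combined with smoothness of $p_{\Omega}$ at positive times this gives $u\in C(\overline{\Omega_{+}}\times[0,\infty))$ and $u\equiv0$ on $\overline{\Omega_{+}}\times\{0\}$.

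With $u\le0$ established on the entire parabolic boundary, the weak maximum principle gives $u\le0$ on $\overline{\Omega_{+}}\times[0,T]$, and letting $T\to\infty$ finishes the proof. I expect the only genuine obstacle to be the continuity of $u$ up to the corner $(x,0)$: without first knowing that the singular parts of the two heat kernels cancel there, one has no compact cylinder on which $u$ is continuous and the maximum principle is not applicable, and it is precisely the placement of $x$ on the mirror hyperplane $H$ (which gives $|x-y|=|x-y^{*}|$) together with the quantitative form of Kac's principle of not feeling the boundary that resolves this. Note that only the weak maximum principle is needed for this non-strict inequality; the strong and Friedman refinements advertised in the introduction should enter only later, when such inequalities are upgraded to strict ones.
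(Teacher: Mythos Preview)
Your proof is correct and follows essentially the same approach as the paper: apply the weak maximum principle to the difference $p_{\Omega}(x,y,t)-p_{\Omega}(x,y^{*},t)$, after using van den Berg's estimate (\ref{KAC}) together with $|x-y|=|x-y^{*}|$ to obtain the continuous zero extension at $t=0$. You even streamline the argument slightly: since $x\in H$ gives $|x-y|=|x-y^{*}|$ for \emph{all} $y$, the bound from (\ref{KAC}) applies uniformly, so the paper's case split into $|x-y|<d_{x}$ versus $|x-y|\ge d_{x}$ (the latter handled via domination by the free-space kernel) is unnecessary.
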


  \begin{tikzpicture}

\draw[scale=1,domain=-pi:0.75*pi,smooth,color=blue,variable=\t]
plot  ({(1.3+cos(2*\t r))*cos(\t r)},{(1.3+cos(2*\t r))*sin(\t r)});

\draw[scale=1,domain=-pi/2:pi/2,smooth,color=purple,thick,variable=\t]
plot  ({(1.3+cos(2*\t r))*cos(\t r)},{(1.3+cos(2*\t r))*sin(\t r)});

   \draw[scale=1,domain=0.75*pi:pi,smooth,color=red,thick,dashed,variable=\t]
plot  ({(1.3+cos(2*\t r))*cos(\t r)},{(1.3+cos(2*\t r))*sin(\t r)});

\draw[scale=1,domain=0.75*pi:pi,smooth,color=red,thick,dashed,variable=\t]
plot  ({(1.3+cos(2*\t r)+0.7*sin(4*\t r)^2)*cos(\t r)},{(1.3+cos(2*\t r)+0.7*sin(4*\t r)^2)*sin(\t r)});

\draw[black,thick] (0,-1.5) -- (0,1.5);

%\draw (1,0) node[right] {y};

\draw (1.4,-0.5)  node[right] {$y$} -- (0,0)  node[anchor=south west] {$x$} -- (-1.4,-0.5)  node[left] {$y^*$};

\draw[green,thick,] (1.4,-0.5)  -- (0,0);

\draw[blue,thick] (0,0)  -- (-1.4,-0.5) ;

%\draw[dotted] (1.4,-0.5);

    \end{tikzpicture}

\begin{proof}
Let $x\in H\cap\Omega\cap\partial\Omega_+$ be fixed, and consider
\[\psi(y,t):=p_{\Omega}(x,y,t)-p_{\Omega}(x,y^*,t)\]
on $\overline{\Omega_+}\times(0,\infty)$. It is well known  that $\psi$ is a continuous function (see the Introduction).
Since $x$ is an interior point of $\Omega$, one gets $d_x>0$. Given an arbitrary $y\in\overline{\Omega_+}$,
we now have two cases to consider.

\emph{Case 1}: Suppose $|x-y|<d_x$. Obviously, $|x-y|=|x-y^{*}|$. It then follows from (\ref{KAC}) that
\[|\psi(y,t)|\leq\frac{4n}{(4\pi t)^{n/2}}\exp(-\frac{3-2\sqrt{2}}{nt}d_x^2)\]
for all $t>0$.

\emph{Case 2}: Suppose $|x-y|\geq d_x$. Since  the Dirichlet heat kernel of an arbitrary open domain
is bounded above by the full space counterpart (see e.g. \cite[(3.3)]{Dodziuk}),
one gets
\[|\psi(y,t)|\leq \frac{2}{(4\pi t)^{n/2}}\exp(-\frac{|x-y|^2}{4t}) \leq \frac{2}{(4\pi t)^{n/2}}\exp(-\frac{d_x^2}{4t})\]
for all $t>0$.

Considering both cases, we see that $\psi(y,t)$ converges uniformly to the zero function on $\overline{\Omega_+}$ as $t$ goes to 0.
Consequently, $\psi$ admits a unique continuous zero extension to $\overline{\Omega_+}\times[0,\infty)$.
Next, let  $y\in\partial\Omega_+$ and $t>0$
be arbitrary. If $y\in H\cap\Omega\cap\partial\Omega_+$, then $y=y^{*}$, hence  $\psi(y,t)=0$; else suppose
$y\in \partial\Omega_+\backslash(H\cap\Omega\cap\partial\Omega_+)$, then $p_{\Omega}(x,y,t)=0$,
which implies that $\psi(y,t)\leq0$. To summarize, we see that $\psi$ is non-positive on the parabolic boundary of
$\overline{\Omega_+}\times[0,\infty)$. Obviously, $\psi$ is a smooth solution  (see the Introduction) to the equation
(\ref{heatequation})
in $\Omega_+\times(0,T)$ (the variable $x$ in (\ref{heatequation}) is accordingly changed to $y$) for arbitrarily large $T$,
hence it follows the weak maximum principle for parabolic equations by letting $T\rightarrow\infty$ that $\psi$
is non-positive on $\overline{\Omega_+}\times[0,\infty)$. This proves the lemma.
\end{proof}

\begin{cor}\label{corollary22}  For any $x, y\in\overline{\Omega_+}$ and $t>0$, one has
$p_{\Omega}(x,y,t)\leq p_{\Omega}(x^*,y^*,t)$.
\end{cor}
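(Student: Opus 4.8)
The plan is to fix $y$ and run the same kind of parabolic maximum-principle argument as in Lemma~\ref{lemma21}, except that now the map $x\mapsto x^{*}$ is used to reflect \emph{both} arguments of the heat kernel simultaneously. I would first establish the inequality for $y\in\Omega_+$ and then recover the general case $y\in\overline{\Omega_+}$ by density at the end. So fix $y\in\Omega_+$ and set
\[
\psi(x,t):=p_{\Omega}(x,y,t)-p_{\Omega}(x^{*},y^{*},t)\qquad\big((x,t)\in\overline{\Omega_+}\times(0,\infty)\big).
\]
Since $\Omega_-$ is the reflection of $\Omega_+$, both $y$ and $y^{*}$ are interior points of $\Omega$, so $d_y>0$ and $d_{y^{*}}>0$; also $x^{*}\in\overline{\Omega_-}\subseteq\overline{\Omega}$ for every $x\in\overline{\Omega_+}$, so $\psi$ is well defined and, by the regularity facts recalled in the Introduction, continuous on $\overline{\Omega_+}\times(0,\infty)$ and a smooth solution of \eqref{heatequation} (in the variable $x$) on $\Omega_+\times(0,\infty)$; here one uses that reflection across $H$ is an isometry, so $\triangle_x p_{\Omega}(x^{*},y^{*},t)=(\triangle p_{\Omega})(x^{*},y^{*},t)=\partial_t p_{\Omega}(x^{*},y^{*},t)$ for $x\in\Omega_+$, where $x^{*}\in\Omega_-$ is an interior point of $\Omega$.

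The first (and essentially only delicate) step is the behaviour of $\psi$ as $t\to0$. Because reflection is an isometry, $|x-y|=|x^{*}-y^{*}|$ for every $x$, so subtracting and adding the common Gaussian $(4\pi t)^{-n/2}\exp(-|x-y|^2/4t)$ and invoking van den Berg's estimate \eqref{KAC} --- together with the symmetry $p_{\Omega}(a,b,t)=p_{\Omega}(b,a,t)$, which lets one keep the distance of the \emph{fixed} point in the exponent --- one obtains
\[
|\psi(x,t)|\le\frac{2n}{(4\pi t)^{n/2}}\Big(e^{-\frac{3-2\sqrt2}{nt}d_y^2}+e^{-\frac{3-2\sqrt2}{nt}d_{y^{*}}^2}\Big)
\]
for all $x\in\overline{\Omega_+}$ and $t>0$. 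As $d_y,d_{y^{*}}$ are fixed positive constants, the right-hand side tends to $0$ as $t\to0$ uniformly in $x$; hence $\psi$ admits a unique continuous extension to $\overline{\Omega_+}\times[0,\infty)$ that vanishes identically on $\overline{\Omega_+}\times\{0\}$. In contrast with Lemma~\ref{lemma21}, no case distinction on $|x-y|$ is needed, since the cancellation above is valid for every $x$.

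It then remains to control $\psi$ on the lateral parabolic boundary $\partial\Omega_+\times(0,\infty)$ and to invoke the weak maximum principle. If $x\in H\cap\Omega\cap\partial\Omega_+$, then $x^{*}=x$, so $\psi(x,t)=p_{\Omega}(x,y,t)-p_{\Omega}(x,y^{*},t)\le0$ is exactly Lemma~\ref{lemma21} (applied with the point $y\in\Omega_+\subseteq\overline{\Omega_+}$). If $x\in\partial\Omega_+\setminus(H\cap\Omega\cap\partial\Omega_+)$, then $x\in\partial\Omega$ (as in the proof of Lemma~\ref{lemma21}, since every boundary point of $\Omega_+$ lies on $\partial\Omega$ or on $H$), so $p_{\Omega}(x,y,t)=0$ and $\psi(x,t)=-p_{\Omega}(x^{*},y^{*},t)\le0$ by positivity of the Dirichlet heat kernel. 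Thus $\psi\le0$ on the parabolic boundary of $\overline{\Omega_+}\times[0,T]$ for every $T>0$, and the weak maximum principle, after letting $T\to\infty$, gives $\psi\le0$ on $\overline{\Omega_+}\times[0,\infty)$, that is, $p_{\Omega}(x,y,t)\le p_{\Omega}(x^{*},y^{*},t)$ for all $x\in\overline{\Omega_+}$, $y\in\Omega_+$, $t>0$. Finally, for an arbitrary $y\in\overline{\Omega_+}$ and any fixed $x\in\overline{\Omega_+}$, $t>0$, the map $y\mapsto p_{\Omega}(x,y,t)-p_{\Omega}(x^{*},y^{*},t)$ is continuous on $\overline{\Omega_+}$ and nonpositive on the dense subset $\Omega_+$, hence nonpositive on all of $\overline{\Omega_+}$, which proves the corollary.

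The main obstacle is precisely the $t\to0$ estimate: one must exploit the heat-kernel symmetry to bound the \eqref{KAC} error by the distance to $\partial\Omega$ of the fixed points $y,y^{*}$ rather than of the moving point $x$, because $d_x$ degenerates as $x$ approaches $\partial\Omega_+$ and a bound in terms of $d_x$ would not be uniform. Everything else is a direct transcription of the argument in Lemma~\ref{lemma21}.
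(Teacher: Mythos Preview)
Your proof is correct, and it is in fact more direct than the paper's. The paper does not fix $y$ and apply \eqref{KAC} with the roles of the variables swapped; instead it integrates against a non-negative $\varphi\in C_c^\infty(\Omega_+)$, sets
\[
\psi(x,t)=\int_{\Omega_+}\big(p_\Omega(x,y,t)-p_\Omega(x^*,y^*,t)\big)\varphi(y)\,dy,
\]
and runs a two-case argument (according to whether $d(x,\operatorname{supp}\varphi)$ is small or large) to force $\psi(\cdot,t)\to0$ uniformly; only afterwards does it let $\varphi$ approach a Dirac mass at $y$. The reason the paper needs this smoothing is that it keeps $d_x$ and $d_{x^*}$ in the exponential of \eqref{KAC}, and those degenerate as $x$ approaches $\partial\Omega_+\cap\partial\Omega$; the $\varphi$-smearing compensates because in that regime $|x-y|$ stays bounded below on $\operatorname{supp}\varphi$.

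Your observation that the symmetry $p_\Omega(x,y,t)=p_\Omega(y,x,t)$ lets one invoke \eqref{KAC} with $d_y$ and $d_{y^*}$ (both strictly positive since $y\in\Omega_+$, $y^*\in\Omega_-\subset\Omega$) bypasses this completely and yields a uniform $t\to0$ bound in a single line. The lateral-boundary analysis and final continuity step are then identical in spirit to the paper's. So both proofs use the same ingredients (Kac-type estimate, Lemma~\ref{lemma21} on the $H$-part of $\partial\Omega_+$, weak maximum principle), but your route avoids the mollifier/Dirac-approximation layer entirely. What the paper's approach buys is a template that is reused verbatim in Theorem~\ref{thm25}, where one again smears against $\varphi$; your symmetry trick would work there too.
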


\begin{tikzpicture}

\draw[scale=1,domain=-pi:0.75*pi,smooth,color=blue,variable=\t]
plot  ({(1.3+cos(2*\t r))*cos(\t r)},{(1.3+cos(2*\t r))*sin(\t r)});

\draw[scale=1,domain=-pi/2:pi/2,smooth,color=purple,thick,variable=\t]
plot  ({(1.3+cos(2*\t r))*cos(\t r)},{(1.3+cos(2*\t r))*sin(\t r)});

\draw[scale=1,domain=0.75*pi:pi,smooth,color=red,thick,dashed,variable=\t]
plot  ({(1.3+cos(2*\t r))*cos(\t r)},{(1.3+cos(2*\t r))*sin(\t r)});

\draw[scale=1,domain=0.75*pi:pi,smooth,color=red,thick,dashed,variable=\t]
plot  ({(1.3+cos(2*\t r)+0.7*sin(4*\t r)^2)*cos(\t r)},{(1.3+cos(2*\t r)+0.7*sin(4*\t r)^2)*sin(\t r)});

\draw[black,thick] (0,-1.5) -- (0,1.5);

\draw (1.4,-0.5)  node[right] {$y$} -- (0.4,0)  node[anchor=south west] {$x$};

\draw (-1.4,-0.5)  node[left] {$y^*$} -- (-0.4,0)  node[anchor=south east] {$x^*$};

\draw[green,thick] (1.4,-0.5)  -- (0.4,0);

\draw[blue,thick] (-1.4,-0.5) -- (-0.4,0);

\end{tikzpicture}

\begin{proof} Let $\varphi$ be an arbitrary non-negative function  in $C_c^{\infty}(\Omega_+)$, and let
 $d_{\varphi}$ denote the distance
between the support of $\varphi$, denoted as usual as  $\mbox{supp}(\varphi)$, and  $\partial\Omega_+$. Consider
\[\psi(x,t):=\int_{\Omega_+}(p_{\Omega}(x,y,t)-p_{\Omega}(x^*,y^*,t))\varphi(y)dy\]
on $\overline{\Omega_+}\times(0,\infty)$, which  is easily seen to be a continuous function.
Given an arbitrary $x\in\overline{\Omega_+}$, we  now have two cases to consider.

\emph{Case 1}: Suppose $d(x,\mbox{supp}(\varphi))<\frac{d_{\varphi}}{2}$. This condition implies
$d(x,\partial\Omega_+)\geq\frac{d_{\varphi}}{2}$.
Since any continuous curve with starting point $x$ has to leave $\Omega_+$  first if
it wants to escape from $\Omega$, one gets $d(x,\partial\Omega)\geq d(x,\partial\Omega_+)$. Thus
$d_x\geq\frac{d_{\varphi}}{2}$. Similarly, we have $d(x^{*},\partial\Omega)\geq d(x^*,\partial\Omega_-)$, which
combined with $d(x^*,\partial\Omega_-)=d(x,\partial\Omega_+)$, yields $d_{x^{*}}\geq\frac{d_{\varphi}}{2}$. It then follows from (\ref{KAC})
by considering $|x-y|=|x^*-y^*|$ for all $y\in\Omega_+$ that
\[|\psi(x,t)|\leq\frac{4n}{(4\pi t)^{n/2}}\exp(-\frac{3-2\sqrt{2}}{4nt}d_{\varphi}^2)\int_{\Omega_+}\varphi(y)dy\]
for all $t>0$.

\emph{Case 2}: Suppose $d(x,\mbox{supp}(\varphi))\geq\frac{d_{\varphi}}{2}$.  Then $|x^*-y^{*}|=|x-y|\geq\frac{d_{\varphi}}{2}$ for all
$y\in
\mbox{supp}(\varphi)$. Since  the Dirichlet heat kernel of an arbitrary open domain
is bounded above by the full space counterpart, one gets
\[|\psi(x,t)|\leq\frac{2}{(4\pi t)^{n/2}}\exp(-\frac{d_{\varphi}^2}{16t})\int_{\Omega_+}\varphi(y)dy\]
for all $t>0$.

Considering both cases, we see that $\psi(x,t)$ converges uniformly to the zero function on $\overline{\Omega_+}$ as $t$ goes to 0.
Consequently, $\psi$ admits a unique continuous zero extension to $\overline{\Omega_+}\times[0,\infty)$.
Next, let  $x\in\partial\Omega_+$ and $t>0$
be arbitrary. If $x\in H\cap\Omega\cap\partial\Omega_+$, then it follows from Lemma \ref{lemma21}
that $\psi(x,t)\leq0$; else suppose
$x\in \partial\Omega_+\backslash(H\cap\Omega\cap\partial\Omega_+)$,
then $p_{\Omega}(x,y,t)=0$ for all
$y\in
\mbox{supp}(\varphi)$, which  implies that $\psi(x,t)\leq0$.
 To summarize, we see that $\psi$ is non-positive on the parabolic boundary of
$\overline{\Omega_+}\times[0,\infty)$. Obviously, $\psi$ is a smooth solution to the  equation
(\ref{heatequation})
in $\Omega_+\times(0,T)$ for arbitrarily large $T$,
hence it follows the weak maximum principle for parabolic equations by letting $T\rightarrow\infty$ that $\psi$
is non-positive on $\overline{\Omega_+}\times[0,\infty)$.

Finally, letting $y\in\Omega_+$ be fixed and  $\{\varphi_i\in C_c^{\infty}(\Omega_+)\}_{i=1}^{\infty}$, $\varphi_i\geq0$,  be an approximation
of the Dirac measure at $y$, we get  $p_{\Omega}(x,y,t)\leq p_{\Omega}(x^*,y^*,t)$ for all $x\in\overline{\Omega_+}$ and $t>0$.
This suffices to prove the corollary by continuity.
\end{proof}

\begin{rmk}\label{rmk23} We should remark that Corollary \ref{corollary22} was first given by
 El Soufi and Harrell in  \cite[line 8, p. 890]{ElSoufi2016}.
 They claimed that for \underline{any fixed $x\in\partial\Omega_+$}, one can apply the
 weak maximum principle for parabolic equations \cite[\S 7.1]{Evans} to the solution
 \[\psi(y,t)\mapsto p_{\Omega}(x,y,t)-p_{\Omega}(x^*,y^*,t)\]
of the heat equation $\triangle_y\psi=\frac{\partial\psi}{\partial t}$ in $\Omega_+\times(0,\infty)$.
But $\psi$ does not admit any  continuous extension to  $\overline{\Omega_+}\times[0,\infty)$ if $x$ is an element of $\partial\Omega_+\backslash(H\cap\Omega\cap\partial\Omega_+)$ such that its
reflection $x^*$ with respect to $H$ lies inside $\Omega$ (see the figure at the end of this remark), because due to Kac's principle of not feeling the boundary,
we should have
\[\psi(x,t)=p_{\Omega}(x,x,t)-p_{\Omega}(x^*,x^*,t)=-p_{\Omega}(x^*,x^*,t)\rightarrow-\infty\ \ \ (t\rightarrow0).\]
\end{rmk}

\begin{tikzpicture}

\draw[scale=1,domain=-pi:0.75*pi,smooth,color=blue,variable=\t]
plot  ({(1.3+cos(2*\t r))*cos(\t r)},{(1.3+cos(2*\t r))*sin(\t r)});

\draw[scale=1,domain=-pi/2:pi/2,smooth,color=purple,thick,variable=\t]
plot  ({(1.3+cos(2*\t r))*cos(\t r)},{(1.3+cos(2*\t r))*sin(\t r)});

\draw[scale=1,domain=0.75*pi:pi,smooth,color=red,thick,dashed,variable=\t]
plot  ({(1.3+cos(2*\t r))*cos(\t r)},{(1.3+cos(2*\t r))*sin(\t r)});

\draw[scale=1,domain=0.75*pi:pi,smooth,color=red,thick,dashed,variable=\t]
plot  ({(1.3+cos(2*\t r)+0.7*sin(4*\t r)^2)*cos(\t r)},{(1.3+cos(2*\t r)+0.7*sin(4*\t r)^2)*sin(\t r)});

\draw[black,thick] (0,-1.5) -- (0,1.5);

\draw (2.1,0.5)  node[anchor=south west] {$x$} -- (-2.1,0.5)  node[anchor=north west] {$x^*$};

\draw[black,thick,dashed] (2.1,0.5)  -- (-2.1,0.5);

\end{tikzpicture}

In the rest part of the section we will prepare some strict inequalities for later use.

\begin{thm}\label{thm24} For any $x,y\in \Omega_+$ and $t>0$, one has $p_{\Omega}(x,y,t)<p_{\Omega}(x^*,y^*,t)$.
\end{thm}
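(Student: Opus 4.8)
The plan is to upgrade the non-strict inequality of Corollary~\ref{corollary22} to a strict one by a strong--maximum--principle argument, the strictness coming entirely from the fact that the ``asymmetric'' piece $\Omega_{--}$ is non-empty. Fix $y\in\Omega_+$ and set
\[
w(x,t):=p_{\Omega}(x^{*},y^{*},t)-p_{\Omega}(x,y,t),\qquad (x,t)\in\overline{\Omega_+}\times(0,\infty).
\]
Since the reflection $R\colon x\mapsto x^{*}$ across $H$ is a Euclidean isometry carrying $\Omega_+$ onto $\Omega_-$, the map $(x,t)\mapsto p_{\Omega}(x^{*},y^{*},t)$ is a smooth solution of \eqref{heatequation} on $R^{-1}(\Omega)\times(0,\infty)\supseteq\Omega_+\times(0,\infty)$; hence, by the regularity of $p_{\Omega}$ recalled in the Introduction, $w$ is continuous on $\overline{\Omega_+}\times(0,\infty)$ and solves \eqref{heatequation} in $\Omega_+\times(0,\infty)$. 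Corollary~\ref{corollary22} gives $w\ge 0$ there. I would then argue by contradiction: if $w(x_0,t_0)=0$ for some $(x_0,t_0)\in\Omega_+\times(0,\infty)$, then on each cylinder $\overline{\Omega_+}\times[\varepsilon,t_0]$ with $0<\varepsilon<t_0$ the continuous function $-w\le 0$ attains its maximum value $0$ at the interior point $(x_0,t_0)$, so the strong maximum principle for parabolic equations forces $w\equiv 0$ on $\overline{\Omega_+}\times[\varepsilon,t_0]$; letting $\varepsilon\downarrow 0$ yields $w\equiv 0$ on $\overline{\Omega_+}\times(0,t_0]$.

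It then remains to contradict this by exhibiting a single point of $\overline{\Omega_+}$ at which $w$ is strictly positive. Choose coordinates so that $H=\{x_1=0\}$, $\Omega_+=\Omega\cap\{x_1>0\}$ and $\Omega_-\cup\Omega_{--}=\Omega\cap\{x_1<0\}$. The set $\Omega_-$ is open, non-empty, and a \emph{proper} subset of the connected open set $\Omega_-\cup\Omega_{--}$ (proper because $\Omega_{--}\ne\varnothing$ is disjoint from $\Omega_-$); being open, it cannot be relatively closed there, so there exists a point $p\in\big(\overline{\Omega_-}\setminus\Omega_-\big)\cap\Omega$ with $p_1<0$, that is, $p\in\partial\Omega_-$, $p\in\Omega$, and $p\ne p^{*}$. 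Put $\xi:=p^{*}$, so $\xi_1>0$. Because $R$ maps $\Omega_-$ homeomorphically onto $\Omega_+$ it maps $\partial\Omega_-$ onto $\partial\Omega_+$, so $\xi\in\partial\Omega_+$; and since $\Omega_+$ coincides with $\Omega$ on the open half-space $\{x_1>0\}$, we have $\partial\Omega_+\cap\{x_1>0\}\subseteq\partial\Omega$, whence $\xi\in\partial\Omega$. On the other hand $\xi^{*}=p\in\Omega$ and $y^{*}\in\Omega_-\subseteq\Omega$. Therefore $p_{\Omega}(\xi,y,t)=0$ while $p_{\Omega}(\xi^{*},y^{*},t)>0$ for every $t>0$, by the strict interior positivity of the Dirichlet heat kernel; in particular $w(\xi,t_0)=p_{\Omega}(\xi^{*},y^{*},t_0)>0$. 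Since $\xi\in R(\overline{\Omega_-})=\overline{\Omega_+}$, this contradicts $w\equiv 0$ on $\overline{\Omega_+}\times(0,t_0]$. Hence $w>0$ throughout $\Omega_+\times(0,\infty)$, which --- as $y\in\Omega_+$ was arbitrary --- is exactly the assertion $p_{\Omega}(x,y,t)<p_{\Omega}(x^{*},y^{*},t)$ for all $x,y\in\Omega_+$, $t>0$.

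The step I expect to need the most care is the geometric one: verifying that the reflected ``interface'' point $\xi$ genuinely lands on $\partial\Omega$ (rather than merely on the cut $H$ or elsewhere on $\partial\Omega_+$) and that its mirror image $\xi^{*}$ sits in the interior of $\Omega$. This is precisely where the standing hypotheses enter --- $\Omega_{--}\ne\varnothing$ is what makes $\Omega_-$ a proper subset and so produces the point $p$, and the connectedness of $\Omega_+$ is what legitimizes applying the strong maximum principle on the cylinder $\Omega_+\times(0,t_0]$ --- and it also explains why strictness must fail in the symmetric case $\Omega_{--}=\varnothing$, where $p_{\Omega}(x,y,t)\equiv p_{\Omega}(x^{*},y^{*},t)$. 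A minor technical point, handled as above, is that $w$ need not extend continuously to $t=0$, which is why the strong maximum principle is applied on the cylinders $\overline{\Omega_+}\times[\varepsilon,t_0]$ and $\varepsilon$ is sent to $0$ afterwards; alternatively, once $w\equiv0$ on $\overline{\Omega_+}\times(0,t_0]$ is known, the real-analyticity of $t\mapsto w(\xi,t)$ propagates this to all $t>0$ before one evaluates, though this refinement is not needed.
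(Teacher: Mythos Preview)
Your approach coincides with the paper's: fix $y$, argue by contradiction, apply the strong maximum principle to the caloric function $w$ on $\overline{\Omega_+}\times[\varepsilon,t_0]$, and then exhibit a point $\xi\in\partial\Omega_+\cap\partial\Omega$ with $\xi^{*}\in\Omega$ (so that $w(\xi,t_0)>0$) to obtain the contradiction. The only wrinkle is in the geometric step you yourself flag: the identifications $\Omega_+=\Omega\cap\{x_1>0\}$ and $\Omega_-\cup\Omega_{--}=\Omega\cap\{x_1<0\}$ are \emph{not} part of the decomposition~(\ref{decomposition}) and can fail --- the paper's third illustrating figure has a component of $\Omega_{--}$ lying in the half-space that contains $\Omega_+$.

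Fortunately both conclusions you draw from this identification survive without it. First, any $z\in\partial\Omega_+\cap\{x_1>0\}\cap\Omega$ would have to lie in $\Omega_{--}$ (by the decomposition, since $z\notin\Omega_+$, $z\notin\Omega_-$, $z\notin H$), contradicting the standing hypothesis that $\Omega_{--}$ adheres nowhere to $\Omega_+$; hence $\partial\Omega_+\cap\{x_1>0\}\subseteq\partial\Omega$ holds regardless. Second, your point $p\in\partial\Omega_-\cap\Omega_{--}$ does satisfy $p_1<0$: if $p_1=0$ then $p=p^{*}\in\overline{\Omega_+}\cap\Omega$ with $p\notin\Omega_+$, forcing $p\in H\cap\Omega\cap\partial\Omega_+$, which is disjoint from $\Omega_{--}$. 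With these two clarifications your argument is identical to the paper's; the paper states the existence of the special boundary point $\widetilde{x}$ in one line, relying on the same connectedness/properness reasoning you spelled out.
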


\begin{proof}
We argue by contradiction and suppose, by considering Corollary \ref{corollary22},  that there exists $(x_0,y_0,t_0)\in\Omega_+\times\Omega_+\times(0,\infty)$ such that $p_{\Omega}(x_0,y_0,t_0)=p_{\Omega}(x_0^*,y_0^*,t_0)$.
Define
\[\psi(x,t):=p_{\Omega}(x,y_0,t)-p_{\Omega}(x^*,y_0^*,t)\]
on $\overline{\Omega_+}\times(0,\infty)$.
It is straightforward to check that $\psi$ is a smooth solution to the equation
(\ref{heatequation})
in $\Omega_+\times(0,\infty)$, and due to  Corollary \ref{corollary22}, $\psi$ is everywhere non-positive.
Note also
$\psi(x_0,t_0)=0$.
Thus
we can apply,  the strong maximum principle for parabolic equations to the restriction of $\psi$
to $\overline{\Omega_+}\times[\frac{t_0}{2},t_0]$, to see that $\psi$
is  identical to zero on $\overline{\Omega_+}\times[\frac{t_0}{2},t_0]$. In particular,
$\psi(\cdot,t_0)\equiv0$
is equivalent to the fact that
\[p_{\Omega}(x,y_0,t_0)=p_{\Omega}(x^*,y_0^*,t_0)\]
for all $x\in \overline{\Omega_+}$. Recall that the reflection image of $\Omega_{+}$
with respect to $H$ is a proper subset of $\Omega_{--}\cup\Omega_{-}$, so
there exists an element $\widetilde{x}$
of  $\partial\Omega_+\backslash(\Omega\cap H\cap\partial\Omega_+)$
such that its reflection $\widetilde{x}^*$ with respect to $H$ lies inside $\Omega$.
At this point we get
\[p_{\Omega}(\widetilde{x},y_0,t_0)=p_{\Omega}(\widetilde{x}^*,y_0^*,t_0),\]
which is an absurd fact because the left hand side is zero while the right hand side is positive.
This finishes the proof of the theorem.
\end{proof}

\begin{rmk}
Based on the  ``log-concavity" results of Brascamp and Lieb \cite{Lieb}, Ba\~{n}uelos et al. \cite[Prop. 5.2]{banue} showed that for any $t>0$, $p_{\mathbb{U}}(x,x,t)$
is a \underline{strictly decreasing} function of $|x|$, where $\mathbb{U}$ is the  open ball $\{x\in\mathbb{R}^n:|x|<1\}$.
We remark that Theorem \ref{thm24} provides a second proof of this result by suitably identifying  $\Omega$ and $\Omega_+$.
To compare, Pascu and Gageonea \cite{Pascu2,Pascu11}
confirmed a  conjecture of Laugesen and Morpurgo \cite{Laugesen}, which asserts that
the diagonal of the Neumann heat kernel
for  $\mathbb{U}$
is a \underline{strictly increasing} radial function for any fixed time.
\end{rmk}

\begin{thm}\label{thm25} For any $x,y\in\Omega_+$ and $t>0$, one has \[p_{\Omega}(x,y,t)+p_{\Omega}(x,y^*,t)<p_{\Omega}(x^*,y,t)+p_{\Omega}(x^*,y^*,t).\]
\end{thm}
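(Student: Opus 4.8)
The plan is to apply the maximum-principle scheme of the proof of Theorem~\ref{thm24} to the specific two-point difference occurring in the statement. I would fix $y\in\Omega_+$ and set
\[\psi(x,t):=p_{\Omega}(x^*,y,t)+p_{\Omega}(x^*,y^*,t)-p_{\Omega}(x,y,t)-p_{\Omega}(x,y^*,t)\]
on $\overline{\Omega_+}\times(0,\infty)$; the goal is to prove $\psi>0$ on $\Omega_+\times(0,\infty)$. Since $x\mapsto x^*$ is an affine isometry carrying $\Omega_+$ into $\Omega_-\subseteq\Omega$, each of the four terms solves $\triangle_x(\cdot)=\partial_t(\cdot)$ on $\Omega_+\times(0,\infty)$, so $\psi$ is a smooth solution of (\ref{heatequation}) there. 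I would note at the outset that the result cannot be obtained merely by adding Theorem~\ref{thm24} (which gives $p_{\Omega}(x^*,y^*,t)-p_{\Omega}(x,y,t)>0$) to a pointwise bound $p_{\Omega}(x,y^*,t)\le p_{\Omega}(x^*,y,t)$: the latter inequality is symmetric under $x\leftrightarrow y$ and would force an equality that fails as soon as $\Omega_{--}\neq\emptyset$, so whatever negativity $p_{\Omega}(x^*,y,t)-p_{\Omega}(x,y^*,t)$ may carry has to be absorbed by running the maximum principle on all of $\psi$.

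The key preliminary step, and the one I expect to require the most care, is the behaviour of $\psi$ as $t\downarrow0$. Writing $G(z,w,t)=(4\pi t)^{-n/2}\exp(-|z-w|^2/4t)$ for the free heat kernel and using that reflection is an isometry, one has $|x^*-y|=|x-y^*|$ and $|x^*-y^*|=|x-y|$, hence $G(x^*,y,t)=G(x,y^*,t)$ and $G(x^*,y^*,t)=G(x,y,t)$. Subtracting and re-adding these Gaussians expresses $\psi$ as a sum of four terms of the form $\pm\big(p_{\Omega}(a,b,t)-G(a,b,t)\big)$ in each of which at least one argument is one of the \emph{fixed} interior points $y$ or $y^*$. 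Estimating each such term by (\ref{KAC}), after using the symmetry $p_{\Omega}(a,b,t)=p_{\Omega}(b,a,t)$ to place the distance on the fixed argument, should give a bound by $\frac{8n}{(4\pi t)^{n/2}}\exp\!\big(-\frac{3-2\sqrt2}{nt}\min(d_y,d_{y^*})^2\big)$, which is independent of $x\in\Omega_+$ and vanishes as $t\downarrow0$; by continuity of $\psi(\cdot,t)$ on $\overline{\Omega_+}$ the same bound then holds on $\overline{\Omega_+}$. Thus $\psi$ extends continuously to $\overline{\Omega_+}\times[0,\infty)$ with $\psi(\cdot,0)\equiv0$. I would stress that, unlike in Lemma~\ref{lemma21} and Corollary~\ref{corollary22}, no case split is needed here, precisely because the poles of both $p_{\Omega}(x^*,\cdot,t)$ and $p_{\Omega}(x,\cdot,t)$ involve the fixed points $y,y^*$.

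The remaining steps would follow the template of Theorem~\ref{thm24}. On the parabolic boundary of $\overline{\Omega_+}\times[0,T]$: if $x\in H\cap\Omega\cap\partial\Omega_+$ then $x=x^*$ and $\psi(x,t)=0$; if $x\in\partial\Omega_+\setminus(H\cap\Omega\cap\partial\Omega_+)$, which forces $x\in\partial\Omega$, then $p_{\Omega}(x,\cdot,t)=0$ by the Dirichlet condition and positivity of $p_{\Omega}$ gives $\psi(x,t)=p_{\Omega}(x^*,y,t)+p_{\Omega}(x^*,y^*,t)\ge0$; together with $\psi(\cdot,0)\equiv0$ this makes $\psi$ non-negative on the parabolic boundary. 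The weak (minimum) principle, applied for each $T$ and then letting $T\to\infty$, yields $\psi\ge0$ on $\overline{\Omega_+}\times[0,\infty)$. For strictness, I would assume $\psi(x_0,t_0)=0$ for some $(x_0,t_0)\in\Omega_+\times(0,\infty)$; the strong maximum principle applied on $\overline{\Omega_+}\times[\frac{t_0}{2},t_0]$ then gives $\psi\equiv0$ there, so in particular $p_{\Omega}(x^*,y,t_0)+p_{\Omega}(x^*,y^*,t_0)=p_{\Omega}(x,y,t_0)+p_{\Omega}(x,y^*,t_0)$ for all $x\in\overline{\Omega_+}$. Taking $x=\widetilde x$, the boundary point in $\partial\Omega_+\setminus(\Omega\cap H\cap\partial\Omega_+)$ with $\widetilde x^*\in\Omega$ furnished by the ``proper subset'' hypothesis exactly as in the proof of Theorem~\ref{thm24}, the right-hand side vanishes while the left-hand side equals $p_{\Omega}(\widetilde x^*,y,t_0)+p_{\Omega}(\widetilde x^*,y^*,t_0)>0$, a contradiction. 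Hence $\psi>0$ on $\Omega_+\times(0,\infty)$, which is the assertion.
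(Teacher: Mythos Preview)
Your argument is correct, and in one key respect it is cleaner than the paper's own sketch. The paper follows the template of Corollary~\ref{corollary22}: it first integrates against a non-negative $\varphi\in C_c^{\infty}(\Omega_+)$, uses a two-case distance argument (as in the proof of Corollary~\ref{corollary22}) together with (\ref{KAC}) to force the smoothed $\psi$ to vanish at $t=0$, applies the weak maximum principle to get the non-strict inequality for the smoothed object, lets $\varphi$ approximate the Dirac mass at $y$ to recover the pointwise non-strict inequality, and only then invokes the strong maximum principle for strictness exactly as you do. Your observation that the free Gaussians cancel identically (because $|x^*-y|=|x-y^*|$ and $|x^*-y^*|=|x-y|$) and that in each residual term $p_{\Omega}(a,b,t)-G(a,b,t)$ one argument is the fixed interior point $y$ or $y^*$ lets you apply (\ref{KAC}) with $d_y$ or $d_{y^*}$ and obtain a bound uniform in $x$ without any case split or any smoothing/Dirac-approximation passage. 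This buys you a shorter and more transparent treatment of the $t\downarrow0$ limit; the price (negligible here) is that the paper's smoothing template is a single mechanism reused across Lemma~\ref{lemma21}, Corollary~\ref{corollary22}, and Theorem~\ref{thm25}, whereas your shortcut exploits the particular algebraic symmetry of the four-term combination in Theorem~\ref{thm25}. The boundary analysis, the weak-maximum-principle step, and the strictness argument via the strong maximum principle and the point $\widetilde{x}$ are identical to the paper's Step~4.
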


\begin{tikzpicture}

\draw[scale=1,domain=-pi:0.75*pi,smooth,color=blue,variable=\t]
plot  ({(1.3+cos(2*\t r))*cos(\t r)},{(1.3+cos(2*\t r))*sin(\t r)});

\draw[scale=1,domain=-pi/2:pi/2,smooth,color=purple,thick,variable=\t]
plot  ({(1.3+cos(2*\t r))*cos(\t r)},{(1.3+cos(2*\t r))*sin(\t r)});

\draw[scale=1,domain=0.75*pi:pi,smooth,color=red,thick,dashed,variable=\t]
plot  ({(1.3+cos(2*\t r))*cos(\t r)},{(1.3+cos(2*\t r))*sin(\t r)});

\draw[scale=1,domain=0.75*pi:pi,smooth,color=red,thick,dashed,variable=\t]
plot  ({(1.3+cos(2*\t r)+0.7*sin(4*\t r)^2)*cos(\t r)},{(1.3+cos(2*\t r)+0.7*sin(4*\t r)^2)*sin(\t r)});

\draw[black,thick] (0,-1.5) -- (0,1.5);

\draw (1.3,-0.6)  node[right] {$y$} -- (0.4,0.3)  node[anchor=south west] {$x$};

\draw (-1.3,-0.6)  node[left] {$y^*$} -- (-0.4,0.3)  node[anchor=south east] {$x^*$};

\draw[green,thick] (1.3,-0.6)  -- (0.4,0.3) -- (-1.3,-0.6);

\draw[blue,thick] (1.3,-0.6) -- (-0.4,0.3) -- (-1.3,-0.6);

\end{tikzpicture}

As the proof of Theorem \ref{thm25} is so similar to those of Corollary \ref{corollary22}
and Theorem \ref{thm24}, we only present a sketch.

\emph{Step 1}:  Consider the continuous function
\[\psi(x,t):=\int_{\Omega_+}\big(p_{\Omega}(x,y,t)+p_{\Omega}(x,y^*,t)-p_{\Omega}(x^*,y,t)-p_{\Omega}(x^*,y^*,t)\big)\varphi(y)dy\]
on $\overline{\Omega_+}\times(0,\infty)$, where $\varphi\in C_c^{\infty}(\Omega_+)$ is non-negative and fixed.

\emph{Step 2}: Show, by applying (\ref{KAC}) suitably, that $\psi$ admits a unique continuous zero extension to $\overline{\Omega_+}\times[0,\infty)$
that is everywhere non-positive on the parabolic boundary of $\overline{\Omega_+}\times[0,\infty)$.
The weak maximum principle for parabolic equations then ensures that $\psi$ is  non-positive on $\overline{\Omega_+}\times[0,\infty)$.

\emph{Step 3}: Taking a non-negative approximation of the Dirac measure at an arbitrary $y\in\Omega_+$ can yield
 \[p_{\Omega}(x,y,t)+p_{\Omega}(x,y^*,t)\leq p_{\Omega}(x^*,y,t)+p_{\Omega}(x^*,y^*,t)\]
 for all $x\in\overline{\Omega_+}$ and $t>0$.

\emph{Step 4}: Suppose there was $(x_0,y_0,t_0)\in\Omega_+\times\Omega_+\times(0,\infty)$ such that \[p_{\Omega}(x_0,y_0,t_0)+p_{\Omega}(x_0,y_0^*,t_0)=p_{\Omega}(x_0^*,y_0,t_0)+p_{\Omega}(x_0^*,y_0^*,t_0).\]
Then apply the strong maximum principle for parabolic equations to the smooth solution
\[\psi(x,t):=p_{\Omega}(x,y_0,t)+p_{\Omega}(x,y_0^*,t)- p_{\Omega}(x^*,y_0,t)-p_{\Omega}(x^*,y_0^*,t)\]
of the  equation (\ref{heatequation})
in $\Omega_+\times(\frac{t_0}{2},t_0)$ to get $\psi(x,t_0)=0$ for all $x\in\overline{\Omega_+}$.
But by picking an element $\widetilde{x}$
of  $\partial\Omega_+\backslash(\Omega\cap H\cap\partial\Omega_+)$
such that its reflection $\widetilde{x}^*$ with respect to $H$ lies inside $\Omega$, we get
$\psi(\widetilde{x},t_0)<0$, a contradiction.

\begin{thm}\label{thm26} For any $x\in\Omega_+$, $y\in\Omega_{--}\cup\Omega_-$ and $t>0$, one has $p_{\Omega}(x,y,t)<p_{\Omega}(x^*,y,t)$.
\end{thm}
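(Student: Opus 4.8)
The plan is to mimic the strategy of Theorems \ref{thm24} and \ref{thm25}: fix $y\in\Omega_{--}\cup\Omega_-$, introduce
\[\psi(x,t):=p_{\Omega}(x^*,y,t)-p_{\Omega}(x,y,t)\qquad(x\in\overline{\Omega_+},\ t>0),\]
first show $\psi\geq0$ on $\overline{\Omega_+}\times(0,\infty)$ by the weak maximum principle, and then upgrade to $\psi>0$ on $\Omega_+\times(0,\infty)$ by the strong maximum principle together with a contradiction on the parabolic boundary. Since $x\mapsto x^*$ is an isometry carrying $\overline{\Omega_+}$ into $\overline{\Omega}$ (indeed $x^*\in\Omega_-\subseteq\Omega$ for $x\in\Omega_+$), the function $\psi$ is smooth on $\overline{\Omega_+}\times(0,\infty)$, and because the Laplacian commutes with reflection, $\psi$ solves $\triangle_x\psi=\partial_t\psi$ in $\Omega_+\times(0,\infty)$. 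On the spatial boundary: if $x\in H\cap\Omega\cap\partial\Omega_+$ then $x^*=x$ and $\psi(x,t)=0$; otherwise $x\in\partial\Omega_+\backslash(H\cap\Omega\cap\partial\Omega_+)$, which by the structure of the decomposition (\ref{decomposition}) lies on $\partial\Omega$, so $p_{\Omega}(x,y,t)=0$ and $\psi(x,t)=p_{\Omega}(x^*,y,t)\geq0$ by positivity of the heat kernel. Hence $\psi\geq0$ on $\partial\Omega_+\times(0,\infty)$.

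The main obstacle is that, in contrast to Lemma \ref{lemma21}, Corollary \ref{corollary22} and Theorems \ref{thm24}--\ref{thm25}, the function $\psi$ does \emph{not} extend continuously by zero to $t=0$: when $y\in\Omega_-$, the term $p_{\Omega}(x^*,y,t)$ blows up as $t\to0$ for those $x\in\overline{\Omega_+}$ whose reflection $x^*$ approaches $y$, so the usual uniform-decay argument fails. I would circumvent this with a one-sided estimate. Since $y$ is an interior point of $\Omega$ that, by (\ref{decomposition}), does not belong to the compact set $\overline{\Omega_+}$, we have $\delta:=d(y,\overline{\Omega_+})>0$; using that $p_{\Omega}$ is dominated by the free heat kernel, $\varepsilon(\tau):=\sup_{x\in\overline{\Omega_+}}p_{\Omega}(x,y,\tau)\leq(4\pi\tau)^{-n/2}e^{-\delta^2/(4\tau)}\to0$ as $\tau\to0^+$. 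Now on any finite cylinder $\overline{\Omega_+}\times[\tau,T]$ the function $-\psi$ is $\leq0$ on $\partial\Omega_+\times[\tau,T]$ (by the previous paragraph) and $\leq\varepsilon(\tau)$ on $\overline{\Omega_+}\times\{\tau\}$ (since $-\psi(x,\tau)=p_{\Omega}(x,y,\tau)-p_{\Omega}(x^*,y,\tau)\leq p_{\Omega}(x,y,\tau)$); hence the weak maximum principle gives $\psi\geq-\varepsilon(\tau)$ throughout $\overline{\Omega_+}\times[\tau,\infty)$. Fixing $(x,t)$ with $t>0$ and letting $\tau\to0^+$ yields $\psi\geq0$, i.e. $p_{\Omega}(x,y,t)\leq p_{\Omega}(x^*,y,t)$ for all $x\in\overline{\Omega_+}$, $y\in\Omega_{--}\cup\Omega_-$, $t>0$.

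For strictness I would argue by contradiction. If $\psi(x_0,t_0)=0$ for some $x_0\in\Omega_+$ and $t_0>0$, then $-\psi$ is a solution of the heat equation in $\Omega_+\times(0,\infty)$, is everywhere $\leq0$, and attains the value $0$ at the interior point $(x_0,t_0)$; applying the strong maximum principle for parabolic equations to its restriction to $\overline{\Omega_+}\times[\tfrac{t_0}{2},t_0]$ forces $\psi\equiv0$ there, so in particular $p_{\Omega}(x,y,t_0)=p_{\Omega}(x^*,y,t_0)$ for every $x\in\overline{\Omega_+}$. Exactly as in the proof of Theorem \ref{thm24}, since $(\Omega_+)^*=\Omega_-$ is a proper subset of $\Omega_{--}\cup\Omega_-$ one can pick $\widetilde{x}\in\partial\Omega_+\backslash(\Omega\cap H\cap\partial\Omega_+)$ whose reflection $\widetilde{x}^*$ lies inside $\Omega$; then $0=p_{\Omega}(\widetilde{x},y,t_0)=p_{\Omega}(\widetilde{x}^*,y,t_0)>0$, which is absurd. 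Therefore $\psi>0$ on $\Omega_+\times(0,\infty)$, which is precisely the claimed inequality.
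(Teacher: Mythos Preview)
Your proof is correct and follows essentially the same route as the paper's argument: fix $y\in\Omega_{--}\cup\Omega_-$, observe that the key distance $d(y,\overline{\Omega_+})>0$ allows a one-sided Gaussian bound on the bottom of the truncated cylinder $\overline{\Omega_+}\times[\tau,T]$, apply the weak maximum principle there and let $\tau\to0^+$, then upgrade to a strict inequality via the strong maximum principle and the $\widetilde{x}$ argument from Theorem~\ref{thm24}. Apart from the sign convention for $\psi$, the two proofs are the same.
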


\begin{tikzpicture}

\draw[scale=1,domain=-pi:0.75*pi,smooth,color=blue,variable=\t]
plot  ({(1.3+cos(2*\t r))*cos(\t r)},{(1.3+cos(2*\t r))*sin(\t r)});

\draw[scale=1,domain=-pi/2:pi/2,smooth,color=purple,thick,variable=\t]
plot  ({(1.3+cos(2*\t r))*cos(\t r)},{(1.3+cos(2*\t r))*sin(\t r)});

\draw[scale=1,domain=0.75*pi:pi,smooth,color=red,thick,dashed,variable=\t]
plot  ({(1.3+cos(2*\t r))*cos(\t r)},{(1.3+cos(2*\t r))*sin(\t r)});

\draw[scale=1,domain=0.75*pi:pi,smooth,color=red,thick,dashed,variable=\t]
plot  ({(1.3+cos(2*\t r)+0.7*sin(4*\t r)^2)*cos(\t r)},{(1.3+cos(2*\t r)+0.7*sin(4*\t r)^2)*sin(\t r)});

\draw[black,thick] (0,-1.5) -- (0,1.5);

\draw (1.3,-0.6)  node[right] {$x$} -- (-2.3,0.7)  node[anchor=south west] {$y$} -- (-1.3,-0.6) node[right] {$x^*$};

\draw[green,thick] (1.3,-0.6)  -- (-2.3,0.7);

\draw[blue,thick] (-2.3,0.7) -- (-1.3,-0.6);

\end{tikzpicture}

\begin{proof} Let $y\in\Omega_{--}\cup\Omega_-$ be fixed, and consider the continuous function
\[\psi(x,t):=p_{\Omega}(x,y,t)-p_{\Omega}(x^*,y,t)\]
on $\overline{\Omega_+}\times(0,\infty)$. It is straightforward to check that $\psi$
solves the equation (\ref{heatequation}) in $\Omega_+\times(0,\infty)$ and is non-positive on $\partial\Omega_+\times(0,\infty)$.
Hence for any $0<\epsilon<T<\infty$, the weak maximum principle for parabolic equations ensures that
\[
\max_{(x,t)\in\overline{\Omega_+}\times[\epsilon,T]}\psi(x,t)\leq\max\{0,\sup_{x\in\Omega_+}\psi(x,\epsilon)\}.\]
We then note
\begin{align*}
\sup_{x\in\Omega_+}\psi(x,\epsilon)&\leq\sup_{x\in\Omega_+}p_{\Omega}(x,y,\epsilon)\\
&\leq\sup_{x\in\Omega_+}\frac{1}{(4\pi \epsilon)^{n/2}}\exp(-\frac{|x-y|^2}{4\epsilon})\\
&=
\frac{1}{(4\pi \epsilon)^{n/2}}\exp(-\frac{d(y,\Omega_+)^2}{4\epsilon}),\end{align*}
where it is crucial  to mention  that
\begin{equation}d(y,\Omega_+)>0.\end{equation}
Consequently, by combining the above inequalities and letting first $\epsilon\rightarrow0$ then $T\rightarrow\infty$,
we see that $\psi$ is non-positive
on $\overline{\Omega_+}\times(0,\infty)$. The remaining
 issue of proving $\psi$ being strictly negative in ${\Omega_+}\times(0,\infty)$ is similar to the corresponding part of Theorem \ref{thm24}, thus omitted.
\end{proof}

\begin{rmk}\label{rmk27}
According to the proof of Theorem \ref{thm26}, we point out that the connectedness assumption on $\Omega_+$ in
Lemma \ref{lemma21}, Corollary \ref{corollary22}, Theorems \ref{thm24}, \ref{thm25} and \ref{thm26} all can be dropped as it suffices to
consider the heat equation
(\ref{heatequation}) in the Cartesian product of each individual connected component of $\Omega_+$ with $(0,\infty)$.
Take Corollary \ref{corollary22} for example: in case $x$ and $y$ stay in the same connected component of $\Omega_+$, then
apply  Corollary \ref{corollary22} itself  by redefining $\Omega_+$ as this component (see the top two figures at the end of this remark);
otherwise, apply the approximation technique introduced in the proof of Theorem \ref{thm26} and note $d(y,\Omega_x)>0$, where
$\Omega_x$ denotes the connected component of $\Omega_+$ that contains $x$ (see the bottom two figures).
The other theorems can be dealt with in much the same way.
\end{rmk}

 \begin{tikzpicture}

  \draw[scale=1,domain=-6*pi/8:9*pi/8,smooth,color=blue,variable=\t]
plot  ({0.75*(1.3+cos(2*\t r))*cos(\t r)},{0.3*(1.3+cos(2*\t r))*sin(\t r)+0.6});

  \draw[scale=1,domain=-pi/2:pi/2,smooth,color=purple,thick,variable=\t]
plot  ({0.75*(1.3+cos(2*\t r))*cos(\t r)},{0.3*(1.3+cos(2*\t r))*sin(\t r)+0.6});

\draw[scale=1,domain=-7*pi/8:-6*pi/8,smooth,color=red,dashed,thick,variable=\t]
plot  ({0.75*(1.3+cos(2*\t r))*cos(\t r)},{0.3*(1.3+cos(2*\t r))*sin(\t r)+0.6});

 \draw[scale=1,domain=7*pi/8:22*pi/8,smooth,color=blue,variable=\t]
plot  ({0.75*(1.3+cos(2*\t r))*cos(\t r)},{0.3*(1.3+cos(2*\t r))*sin(\t r)-0.6});

 \draw[scale=1,domain=-pi/2:pi/2,smooth,color=purple,thick,variable=\t]
plot  ({0.75*(1.3+cos(2*\t r))*cos(\t r)},{0.3*(1.3+cos(2*\t r))*sin(\t r)-0.6});

 \draw[scale=1,domain=6*pi/8:7*pi/8,smooth,color=red,dashed,thick,variable=\t]
plot  ({0.75*(1.3+cos(2*\t r))*cos(\t r)},{0.3*(1.3+cos(2*\t r))*sin(\t r)-0.6});

\draw[red,thick,dashed,] (-0.7,-0.23) -- (-0.7,0.23);

\draw[red,thick,dashed] (-1.4,-0.3) -- (-1.4,0.39);

\draw[black,thick] (0,-1.5) -- (0,1.5);

\draw (1.2,-0.4)  node[below] {$y$} -- (0.4,-0.75)  node[above] {$x$};

\draw[green,thick] (1.2,-0.4)  -- (0.4,-0.75);

 \begin{scope}[xshift=150]

\draw[scale=1,domain=-6*pi/8:9*pi/8,smooth,color=red,dashed,thick,variable=\t]
plot  ({0.75*(1.3+cos(2*\t r))*cos(\t r)},{0.3*(1.3+cos(2*\t r))*sin(\t r)+0.6});

\draw[red,thick,dashed,] (-0.7,-0.23) -- (-0.7,0.23);

\draw[red,thick,dashed] (-1.4,-0.3) -- (-1.4,0.39);

\draw[scale=1,domain=6*pi/8:7*pi/8,smooth,color=red,dashed,thick,variable=\t]
plot  ({0.75*(1.3+cos(2*\t r))*cos(\t r)},{0.3*(1.3+cos(2*\t r))*sin(\t r)-0.6});

\draw[scale=1,domain=7*pi/8:22*pi/8,smooth,color=blue,variable=\t]
plot  ({0.75*(1.3+cos(2*\t r))*cos(\t r)},{0.3*(1.3+cos(2*\t r))*sin(\t r)-0.6});

\draw[scale=1,domain=-pi/2:pi/2,smooth,color=purple,thick,variable=\t]
plot  ({0.75*(1.3+cos(2*\t r))*cos(\t r)},{0.3*(1.3+cos(2*\t r))*sin(\t r)-0.6});

\draw[black,thick] (0,-1.5) -- (0,0);

\draw (1.2,-0.4)  node[below] {$y$} -- (0.4,-0.75)  node[above] {$x$};

\draw[green,thick] (1.2,-0.4)  -- (0.4,-0.75);

 \end{scope}

    \end{tikzpicture}

     \begin{tikzpicture}

  \draw[scale=1,domain=-6*pi/8:9*pi/8,smooth,color=blue,variable=\t]
plot  ({0.75*(1.3+cos(2*\t r))*cos(\t r)},{0.3*(1.3+cos(2*\t r))*sin(\t r)+0.6});

  \draw[scale=1,domain=-pi/2:pi/2,smooth,color=purple,thick,variable=\t]
plot  ({0.75*(1.3+cos(2*\t r))*cos(\t r)},{0.3*(1.3+cos(2*\t r))*sin(\t r)+0.6});

\draw[scale=1,domain=-7*pi/8:-6*pi/8,smooth,color=red,dashed,thick,variable=\t]
plot  ({0.75*(1.3+cos(2*\t r))*cos(\t r)},{0.3*(1.3+cos(2*\t r))*sin(\t r)+0.6});

 \draw[scale=1,domain=7*pi/8:22*pi/8,smooth,color=blue,variable=\t]
plot  ({0.75*(1.3+cos(2*\t r))*cos(\t r)},{0.3*(1.3+cos(2*\t r))*sin(\t r)-0.6});

 \draw[scale=1,domain=-pi/2:pi/2,smooth,color=purple,thick,variable=\t]
plot  ({0.75*(1.3+cos(2*\t r))*cos(\t r)},{0.3*(1.3+cos(2*\t r))*sin(\t r)-0.6});

 \draw[scale=1,domain=6*pi/8:7*pi/8,smooth,color=red,dashed,thick,variable=\t]
plot  ({0.75*(1.3+cos(2*\t r))*cos(\t r)},{0.3*(1.3+cos(2*\t r))*sin(\t r)-0.6});

\draw[red,thick,dashed,] (-0.7,-0.23) -- (-0.7,0.23);

\draw[red,thick,dashed] (-1.4,-0.3) -- (-1.4,0.39);

\draw[black,thick] (0,-1.5) -- (0,1.5);

\draw (1.2,0.7)  node[left] {$y$} -- (0.4,-0.65)  node[right] {$x$};

\draw[green,thick] (1.2,0.7)  -- (0.4,-0.65);

 \begin{scope}[xshift=150]

\draw[scale=1,domain=-6*pi/8:9*pi/8,smooth,color=red,dashed,thick,variable=\t]
plot  ({0.75*(1.3+cos(2*\t r))*cos(\t r)},{0.3*(1.3+cos(2*\t r))*sin(\t r)+0.6});

\draw[red,thick,dashed,] (-0.7,-0.23) -- (-0.7,0.23);

\draw[red,thick,dashed] (-1.4,-0.3) -- (-1.4,0.39);

\draw[scale=1,domain=6*pi/8:7*pi/8,smooth,color=red,dashed,thick,variable=\t]
plot  ({0.75*(1.3+cos(2*\t r))*cos(\t r)},{0.3*(1.3+cos(2*\t r))*sin(\t r)-0.6});

\draw[scale=1,domain=7*pi/8:22*pi/8,smooth,color=blue,variable=\t]
plot  ({0.75*(1.3+cos(2*\t r))*cos(\t r)},{0.3*(1.3+cos(2*\t r))*sin(\t r)-0.6});

\draw[scale=1,domain=-pi/2:pi/2,smooth,color=purple,thick,variable=\t]
plot  ({0.75*(1.3+cos(2*\t r))*cos(\t r)},{0.3*(1.3+cos(2*\t r))*sin(\t r)-0.6});

\draw[black,thick] (0,-1.5) -- (0,0);

\draw (1.2,0.7)  node[left] {$y$} -- (0.4,-0.65)  node[right] {$x$};

\draw[green,thick] (1.2,0.7)  -- (0.4,-0.65);

 \end{scope}

    \end{tikzpicture}

\begin{rmk}\label{rmk29}
We point out that the smoothness assumption on $\Omega$ in Theorems \ref{thm24}, \ref{thm25} and \ref{thm26}
can be dropped in two steps. Step 1:
Given a  bounded connected open subset  of  $\mathbb{R}^n$  of the form (\ref{decomposition}),
 we can approximate it by an increasing sequence of connected
compactly supported smooth subdomains
to get analogues of these theorems, obtaining inequalities rather than strict inequalities at the moment.
Step 2: These inequalities can then be improved to strict ones by suitably appealing to the strong maximum principle for parabolic equations.
\end{rmk}

%\begin{rmk} In practice the domain $\Omega$ of interest is usually  fixed and explicitly given,
%whereas the quadruple $(\Omega,H,\Omega_+,\Omega_-\cup\Omega_{--})$ not.
%\end{rmk}

\begin{cor}\label{corollary210}
For any $x\in \Omega_+$ and $t>0$, one has
\[\int_{\Omega}p_{\Omega}(x,y,t)dy<\int_{\Omega}p_{\Omega}(x^*,y,t)dy.\]
To be clear, by considering Remarks \ref{rmk27} and \ref{rmk29}, $\Omega\subset\mathbb{R}^n$ is assumed to be a bounded connected open set of the form (\ref{decomposition}).
\end{cor}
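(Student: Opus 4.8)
The plan is to split the $y$-integral according to the decomposition (\ref{decomposition}), fold the component $\Omega_-$ onto $\Omega_+$ by the reflection $y\mapsto y^*$, and then compare the resulting integrals termwise by invoking Theorems \ref{thm25} and \ref{thm26}.

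Concretely, the first step is to observe that the middle piece $H\cap\Omega\cap\partial\Omega_+$ of (\ref{decomposition}) lies in a hyperplane and is therefore Lebesgue-null, so that for every $z\in\overline{\Omega}$ and $t>0$
\[\int_{\Omega}p_{\Omega}(z,y,t)\,dy=\int_{\Omega_+}p_{\Omega}(z,y,t)\,dy+\int_{\Omega_-}p_{\Omega}(z,y,t)\,dy+\int_{\Omega_{--}}p_{\Omega}(z,y,t)\,dy,\]
each integral being finite since $p_{\Omega}(z,\cdot,t)$ is continuous on the bounded set $\overline{\Omega}$. Because $\Omega_-$ is the mirror image of $\Omega_+$ across $H$ and $y\mapsto y^*$ is a volume-preserving bijection of $\Omega_-$ onto $\Omega_+$, the substitution $y\mapsto y^*$ rewrites $\int_{\Omega_-}p_{\Omega}(z,y,t)\,dy=\int_{\Omega_+}p_{\Omega}(z,y^*,t)\,dy$. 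Taking $z=x$ and $z=x^*$ and subtracting, the asserted inequality becomes equivalent to
\[\int_{\Omega_+}\!\big(p_{\Omega}(x,y,t)+p_{\Omega}(x,y^*,t)\big)dy+\int_{\Omega_{--}}\!\!p_{\Omega}(x,y,t)\,dy<\int_{\Omega_+}\!\big(p_{\Omega}(x^*,y,t)+p_{\Omega}(x^*,y^*,t)\big)dy+\int_{\Omega_{--}}\!\!p_{\Omega}(x^*,y,t)\,dy.\]

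The second step is to bound the two families of integrals separately. On $\Omega_+$: Theorem \ref{thm25} gives, for the fixed $x\in\Omega_+$ and every $y\in\Omega_+$, the strict pointwise inequality $p_{\Omega}(x,y,t)+p_{\Omega}(x,y^*,t)<p_{\Omega}(x^*,y,t)+p_{\Omega}(x^*,y^*,t)$; since both sides are continuous in $y$ (see the Introduction) and $\Omega_+$ is a non-empty open set, integrating over $\Omega_+$ preserves the strictness. On $\Omega_{--}$: Theorem \ref{thm26}, applied with $y\in\Omega_{--}$ (which is part of $\Omega_{--}\cup\Omega_-$), gives $p_{\Omega}(x,y,t)\le p_{\Omega}(x^*,y,t)$ for every such $y$, hence $\int_{\Omega_{--}}p_{\Omega}(x,y,t)\,dy\le\int_{\Omega_{--}}p_{\Omega}(x^*,y,t)\,dy$. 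Adding these two comparisons yields the displayed inequality, and hence the corollary. Finally, Remarks \ref{rmk27} and \ref{rmk29} ensure that Theorems \ref{thm25} and \ref{thm26} remain valid for an arbitrary bounded connected open $\Omega$ of the form (\ref{decomposition}), with no connectedness or smoothness hypotheses, so the argument applies in the generality claimed.

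I do not anticipate a genuine obstacle here; the one point requiring a moment's care is the one flagged above, namely that the strict inequality of Theorem \ref{thm25} survives integration over $\Omega_+$ (thanks to continuity of the heat kernel and $|\Omega_+|>0$), whereas the contribution from $\Omega_{--}$ is only needed to point the right way non-strictly.
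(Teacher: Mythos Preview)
Your proof is correct and follows essentially the same route as the paper: decompose the integral according to (\ref{decomposition}), use Theorem \ref{thm25} on $\Omega_+\cup\Omega_-$ (after the reflection change of variables) and Theorem \ref{thm26} on $\Omega_{--}$, then add. The only cosmetic difference is that the paper invokes the strict inequality from Theorem \ref{thm26} on $\Omega_{--}$ as well, whereas you take it non-strictly and rely on the $\Omega_+$ term for strictness; either works.
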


\begin{proof}
It follows from Theorem \ref{thm25} (see also Remarks \ref{rmk27} and \ref{rmk29}) that
\[\int_{\Omega_{-}\cup\Omega_+}p_{\Omega}(x,y,t)dy<\int_{\Omega_{-}\cup\Omega_+}p_{\Omega}(x^{*},y,t)dy,\]
and from Theorem \ref{thm26} (see also Remarks \ref{rmk27} and \ref{rmk29}) that
\[\int_{\Omega_{--}}p_{\Omega}(x,y,t)dy<\int_{\Omega_{--}}p_{\Omega}(x^*,y,t)dy.\]
Combining both strict inequalities proves the corollary.
\end{proof}

\section{Heat content optimization}

In  the previous section we have assumed that $\Omega$, a  bounded connected open subset of  $\mathbb{R}^n$ with smooth boundary,
can be written  as (\ref{decomposition})
with $\Omega_+$ being connected.
Now we further require that $\Omega$ is of the form
$\Omega=\Theta\backslash \overline{B}$, where $\Theta$
is some  open subset of $\mathbb{R}^n$, $B$
is a relatively compact open convex subset of $\Theta$
such that it is symmetric with respect to the hyperplane $H$ given in (\ref{decomposition}),
and $\Omega_+$ shares part of its boundary with $B$. Obviously, both $\Theta$ and $B$
are of smooth boundary, and $\Omega_+$ is uniquely determined by $\Omega$, $H$ and $B$.
Let $V$ denote the unit normal vector of $H$ pointing toward $\Omega_+$, and consider
 \[\Omega_{\epsilon}:=\Theta\backslash(\overline{B}+\epsilon V)\]
for $\epsilon\in\mathbb{R}$ with small enough modulus.

\medskip

\begin{tikzpicture}

% \fill[blue] (0,0) -- (2,0)  arc (0:90:2cm) -- cycle;

\draw[black,thick] (1,1) circle (2cm);

%\draw[black,thick] (2,1) circle (0.5cm);

\draw (0.5,1) node[left] {$\Omega$};

\fill[black] (2.5,1) arc (0:360:0.5cm) -- cycle;

\draw[black,thick,dashed] (2,-1.5) -- (2,3.5);

\draw (2,3.3) node[right] {$H$};

\begin{scope}[xshift=200]

 \draw[scale=1,domain=-pi/3:pi/3,smooth,color=purple,thick,variable=\t]
plot  ({1+2*cos(\t r)},{1+2*sin(\t r)});

 \draw[scale=1,domain=pi/3:5*pi/3,smooth,color=red,thick,dashed,variable=\t]
plot  ({0.98+2*cos(\t r)},{1+2*sin(\t r)});

 \draw[scale=1,domain=-pi/2:pi/2,smooth,color=purple,thick,variable=\t]
plot  ({2+0.5*cos(\t r)},{1+0.5*sin(\t r)});

 \draw[scale=1,domain=2*pi/3:4*pi/3,smooth,color=blue,thick,variable=\t]
plot  ({3+2*cos(\t r)},{1+2*sin(\t r)});

 \draw[scale=1,domain=2*pi/3:4*pi/3,smooth,color=red,thick,dashed,variable=\t]
plot  ({2.97+2*cos(\t r)},{1+2*sin(\t r)});

 \draw[scale=1,domain=pi/2:3*pi/2,smooth,color=blue,thick,variable=\t]
plot  ({2+0.5*cos(\t r)},{1+0.5*sin(\t r)});

\draw[purple,thick] (2.015,1.5) -- (2.015,2.732);

\draw[blue,thick] (1.985,1.5) -- (1.985,2.732);

\draw[purple,thick] (2.015,-0.732) -- (2.015,0.5);

\draw[blue,thick] (1.985,-0.732) -- (1.985,0.5);

\draw (2.8,0.1) node[left] {$\Omega_+$};

\draw (2,0.1) node[left] {$\Omega_-$};

\draw (0.7,0.1) node[left] {$\Omega_{--}$};

%\draw[black,thick] (1.98,-2) -- (1.98,4);

\draw[black,thick,dashed] (2,2.5) -- (2,3.5);

\draw[black,thick,dashed] (2,-1.5) -- (2,-0.5);

\draw[black,thick,->] (2,1.8) -- (2.4,1.8);

\draw (2.3,1.8) node[right] {$V$};

\draw (2,3.3) node[right] {$H$};

\end{scope}

\end{tikzpicture}

\begin{thm}\label{thm31} For any $t>0$, one has
\[\frac{dH_{\Omega_{\epsilon}}(t)}{d\epsilon}\Big|_{\epsilon=0}>0.\]
\end{thm}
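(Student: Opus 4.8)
The plan is to establish a Hadamard-type variational formula for $H_{\Omega_\epsilon}(t)$ — this is where Savo's variational formula enters — and then to post-process it using the reflection inequality of Corollary \ref{corollary210}, with the convexity of $B$ deciding the relevant signs. Throughout, $\nu_\Omega$ denotes the outward unit normal of $\Omega$.

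\emph{Step 1 (variational formula).} Write $u_\epsilon:=\psi_{\Omega_\epsilon}$ for the heat-content solution on $\Omega_\epsilon$ (heat equation, initial datum $1$, Dirichlet conditions), so $H_{\Omega_\epsilon}(t)=\int_{\Omega_\epsilon}u_\epsilon(\cdot,t)$. Since $\Omega_\epsilon$ differs from $\Omega$ only by a rigid translation of the obstacle, $|\Omega_\epsilon|$ is independent of $\epsilon$ and $\partial\Theta$ is fixed, so the only moving portion of the boundary is $\partial B$, with outward normal velocity $\phi(z):=V\cdot\nu_\Omega(z)$ at $z\in\partial B$ and $\phi\equiv0$ on $\partial\Theta$. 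Differentiating at $\epsilon=0$, the boundary term in the transport theorem drops out because $u_\epsilon$ vanishes on $\partial\Omega_\epsilon$, so $\frac{d}{d\epsilon}H_{\Omega_\epsilon}(t)\big|_0=\int_\Omega\dot u(\cdot,t)$, where the shape derivative $\dot u$ solves the heat equation in $\Omega\times(0,\infty)$ with zero initial datum and Dirichlet data $-\phi(z)\,\partial_{\nu_z}\psi_\Omega(z,t)$ on $\partial\Omega$. Representing $\dot u$ through the heat Poisson kernel $-\partial_\nu p_\Omega$ and integrating in $x$ (using $\int_\Omega(-\partial_{\nu_z}p_\Omega(x,z,\tau))\,dx=-\partial_{\nu_z}\psi_\Omega(z,\tau)$), one arrives at
\[\frac{d}{d\epsilon}H_{\Omega_\epsilon}(t)\Big|_{\epsilon=0}=\int_{\partial B}\phi(z)\,G(z,t)\,d\sigma(z),\qquad G(z,t):=\int_0^t\partial_{\nu_z}\psi_\Omega(z,s)\,\partial_{\nu_z}\psi_\Omega(z,t-s)\,ds.\]
By the Hopf lemma $\partial_{\nu_z}\psi_\Omega(z,s)<0$ for all $s>0$, hence $G(z,t)>0$; the $s$-integral converges at the endpoints since $\partial_{\nu_z}\psi_\Omega(z,s)=O(s^{-1/2})$ as $s\to0$.

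\emph{Step 2 (reflection and convexity).} Normalize $H=\{x:x\cdot V=0\}$ and write $z^*$ for the reflection of $z$. Symmetry of $B$ gives $z^*\in\partial B$, $\nu_\Omega(z^*)=\nu_\Omega(z)^*$, $\phi(z^*)=-\phi(z)$, and reflection-invariance of $d\sigma$, so pairing $z$ with $z^*$ collapses the formula to $\frac{d}{d\epsilon}H_{\Omega_\epsilon}(t)\big|_0=\int_{\partial B\cap\{x\cdot V<0\}}\phi(z)\,[G(z,t)-G(z^*,t)]\,d\sigma(z)$. Here convexity of $B$ is essential: if $z\cdot V<0$ but $\phi(z)=V\cdot\nu_\Omega(z)<0$, then $z^*$ lies strictly on the exterior side of the supporting hyperplane of $B$ at $z$, contradicting $z^*\in\overline B$; hence $\phi\ge0$ on $\partial B\cap\{x\cdot V<0\}$, and $\phi>0$ near the rearmost point of $B$ (the minimizer of $x\cdot V$ on $\partial B$, where $\nu_\Omega=V$), a set of positive surface measure. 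So it remains to prove $G(z,t)>G(z^*,t)$ on $\partial B\cap\{x\cdot V<0\}$.

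\emph{Step 3 (comparison of $G$).} Since both $\partial_{\nu_z}\psi_\Omega(z,\cdot)$ and $\partial_{\nu_{z^*}}\psi_\Omega(z^*,\cdot)$ are negative and $G$ is a self-convolution, it suffices to show $-\partial_{\nu_z}\psi_\Omega(z,s)>-\partial_{\nu_{z^*}}\psi_\Omega(z^*,s)$ for every $s>0$. For $z\in\partial B\cap\{x\cdot V<0\}$ one has $z^*\in\partial B\cap\partial\Omega_+$, so a small interior ball of $\Omega$ at $z$ consists of points $x$ whose reflections $x^*$ lie in $\Omega_+$; on this ball the function $v(x,s):=\psi_\Omega(x,s)-\psi_\Omega(x^*,s)$ solves the heat equation (reflection being an isometry), vanishes at $(z,s)$, and is strictly positive in the interior by Corollary \ref{corollary210} applied at $x^*\in\Omega_+$. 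Friedman's strong maximum principle then gives $\partial_{\nu_z}v(z,s)<0$, i.e. $\partial_{\nu_z}\psi_\Omega(z,s)<\partial_{\nu_{z^*}}\psi_\Omega(z^*,s)$. Consequently $G(z,t)>G(z^*,t)$ everywhere on $\partial B\cap\{x\cdot V<0\}$, the integrand in Step 2 is $\ge0$ and strictly positive on a set of positive measure, and the derivative is strictly positive.

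The step I expect to be the main obstacle is Step 1: rigorously justifying the differentiability of $\epsilon\mapsto H_{\Omega_\epsilon}(t)$, identifying the shape derivative $\dot u$ and its boundary data, and controlling the kernel $G$ near $t=0$ where $\partial_\nu\psi_\Omega$ is only $O(s^{-1/2})$ — this is exactly the content packaged by Savo's variational formula. Once that formula is available, Steps 2 and 3 are a fairly clean reflection argument resting on Corollary \ref{corollary210}, the parabolic Hopf lemma, and the elementary convexity fact pinning down the sign of $\phi$ on each half of $\partial B$.
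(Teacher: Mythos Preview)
Your proposal is correct and follows essentially the same route as the paper: Savo's variational formula, a reflection pairing over the two halves of $\partial B$, Corollary~\ref{corollary210} to compare $u$ with its reflection, the parabolic Hopf/Friedman lemma to pass to strict normal-derivative inequalities, and convexity of $B$ to fix the sign of $\langle V,\nu\rangle$. The only cosmetic differences are that you integrate over $(\partial B)\cap\{x\cdot V<0\}$ and use the outward normal while the paper integrates over $(\partial B)_+\subset\partial\Omega_+$ with the inner normal, and that you apply Friedman on a small interior ball at $z$ whereas the paper applies it on $\Omega_+$ itself; neither choice changes the substance of the argument.
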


\begin{proof}
It follows from Savo's variational formula\footnote{We refer the interested reader to \cite{ElSoufi07,Ozawa} for the first variation formula for the Dirichlet heat trace.} \cite[Thm. 10]{Savo} that for any $t>0$,
\[\frac{dH_{\Omega_{\epsilon}}(t)}{d\epsilon}\Big|_{\epsilon=0}=-\int_0^t\Bigg[\int_{\partial B}\langle V,N\rangle\frac{\partial u}{\partial N}(x,\tau)\frac{\partial u}{\partial N}(x,t-\tau)dS(x)\Bigg]d\tau,\]
where $N$ is the unit inner normal to the boundary of $\Omega$, and \[u(x,\tau):=\int_{\Omega}p_{\Omega}(x,y,\tau)dy\ \ \ (x\in\overline{\Omega},\ \tau>0).\]
Note that $(\partial B)\backslash H$ consists of two symmetric connected components,
denoted by $(\partial B)_+$ and $(\partial B)_-$ respectively, and suppose
$(\partial B)_+$ is contained in $\partial\Omega_+$. Thus by symmetry, one gets
\[\frac{dH_{\Omega_{\epsilon}}(t)}{d\epsilon}\Big|_{\epsilon=0}=-\int_0^t\Bigg[\int_{(\partial B)_+}\langle V,N\rangle
\Big[\frac{\partial u}{\partial N}(x,\tau)\frac{\partial u}{\partial N}(x,t-\tau)-\frac{\partial u}{\partial N}(x^*,\tau)\frac{\partial u}{\partial N}(x^*,t-\tau)\Big]dS(x)\Bigg]d\tau.\]
Since $u(x,\tau)$ vanishes on $(\partial B)_+\times(0,\infty)$ and is positive in $\Omega_+\times(0,\infty)$,
we see that
$\frac{\partial u}{\partial N}(x,\tau)\geq0$
for all $x\in(\partial B)_+$ and $\tau>0$. Actually, this trivial inequality can be improved to
\begin{equation}\label{F31}\frac{\partial u}{\partial N}(x,\tau)>0\end{equation}
if we appeal to Friedman's strong  maximum principle concerning
extremal values attained
at \underline{\emph{non-bottom part of the parabolic boundary}}  \cite[Thm. 2]{Friedman} (see also the books \cite{Friedman64,Lieberman,Protter}),
which is an extension of Hopf's  maximum principle from elliptic equations to parabolic ones.
To employ this theorem, it remains to check that $\Omega_+$
 has the interior ball property at every element of $(\partial B)_+$, which obviously stands because of the convexity of $B$.
Applying the same maximum principle to  $u(x^*,\tau)-u(x,\tau)$, which
vanishes on $(\partial B)_+\times(0,\infty)$ and  is positive in $\Omega_+\times(0,\infty)$ because of Corollary \ref{corollary210},
we see that
\begin{equation}\label{F32}\frac{\partial u}{\partial N}(x^*,\tau)>\frac{\partial u}{\partial N}(x,\tau)\end{equation}
for all $x\in(\partial B)_+$ and $\tau>0$.
Observe from the convexity of $B$ that the inner product between $V$ and $N$, as a function of
$x\in(\partial B)_+$, is everywhere  non-negative and assumes  maximal value 1 at points with longest distance to $H$.
Combining this observation with (\ref{F32}), (\ref{F31}), and the variation formula displayed earlier proves the theorem.
\end{proof}

Theorem \ref{thm11} is an immediate consequence of our main result Theorem \ref{thm31}.

We mention  that many illustrating examples about
 fundamental eigenvalue optimization given in \cite{Harrell2001}
 can be transformed into the current heat content context with suitable modifications, notably sign-changing (maximum $\rightleftharpoons$ minimum).

\medskip

\textbf{Acknowledgements}. The author would like to thank Zhirun Zhan for helpful discussions.

\end{document}